\newtheorem{lemma}{Lemma}[section]
\newtheorem{theorem}[lemma]{Theorem}
\newtheorem{proposition}[lemma]{Proposition}
\newtheorem{corollary}[lemma]{Corollary}
\theoremstyle{definition}
\theoremstyle{remark}
\def\SN{\mathbb N}    
\def\SQ{\mathbb Q}    
\def\SZ{\mathbb Z}    
\def\SP{\mathbb P}    
\def\SR{\mathbb R}    
\def\aa{\mathfrak a}    
\def\int{\mbox{\rm{Int}}}             
\def\Cc{\mathcal C}         
\def\Ii{\mathcal I}
\def\Pp{\mathcal P}         
\def\Oo{\mathcal O}         
\def\e{\varepsilon}
\def\phi{\varphi}
\def\Gal{\mathrm{Gal}}
\def\Ker{\mathrm{Ker}}
\def\be{\begin{equation}}
\def\ee{\end{equation}}
\def\Ima{\mathrm{Im}}
\def\Coker{\textrm{Coker}}
\def\Ker{\mathrm{Ker}}
\begin{document}
\title[Kubota's Satz 4]{An elementary formulation of Kubota's proof\\ of Satz 4 about biquadratic number fields}
\author{Jacques Boulanger and Jean-Luc Chabert}

\address{LAMFA CNRS-UMR 7352,
Universit\'e de Picardie,
80039 Amiens, France}
\email{boulanger.jac@gmail.com}
\email{jean-luc.chabert@u-picardie.fr}

\keywords{Biquadratic Number Fields, Strongly Ambiguous Classes, P\'olya Groups, Integer-Valued Polynomials}
\subjclass[2010]{Primary  11R29; Secondary 11R16, 13F20}

\begin{abstract}
The aim of this note is to give an elementary formulation of Tomio Kubota's proof of Satz 4 in~\cite{bib:kubota1956} since this proposition is a cornerstone for the computation of the order of the P\'olya group of the biquadratic number fields.
\end{abstract}

\maketitle

\section{Introduction and Notations}

\noindent{\bf Standard Notations.}
{\it If $k$ is a number field, we denote by}

\noindent $\bullet$ $\Oo_k$ its ring of integers,

\noindent $\bullet$ $\Oo_k^\times$ its unit group,

\noindent $\bullet$ $\Ii_{k}$ its group of nonzero fractional ideals,

\noindent $\bullet$ $\Pp_{k}$ its group of nonzero fractional principal ideals,

\noindent $\bullet$  $\Cc l(k)=\Ii_k/\Pp_k$ its class group,

\noindent $\bullet$  $\Pi_q(k)$ the product of all the prime ideals of $k$ with norm $q$ if there are some and $\Oo_k$ else, 

\noindent $\bullet$ $\Pp o(k)$ the P\'olya group of $k$, that is, the subgroup of $\Cc l(k)$ 
generated by the classes $\overline{\Pi_q(k)},$

\smallskip

\noindent{\bf General Notations.} {\it If $k$ is a Galois number field, we denote by}

\smallskip

\noindent $\bullet$ $e_p(k/\SQ)$ the ramification index of the prime number $p$ in the extension $k/\SQ$,

\noindent $\bullet$ $G_k$ the Galois group $\Gal(k/\SQ)$,

\noindent $\bullet$ $\Ii_{k}^{G_k}$ the subgroup $\langle\, \Pi_q \mid q\in\SN^* \,\rangle$ of $\Ii_k$ whose elements are stable by $G_k,$

\smallskip

If $k/\SQ$ is Galois, then $\Pi_q(k)$ is principal except possibly when $q$ is a power of a ramified prime and 
$\Pp o(k)$ is the group of the strongly ambiguous classes, that is, the group formed by the classes of the ambiguous ideals:$$\Pp o(k)=\Ii_k^{G_k}/\left(\Pp_k\cap\Ii_k^{G_k}\right)=\Ii_k^{G_k}/\Pp_k^{G_k}.$$
\noindent {\bf Specific Notations.} {\it If $K$ is a biquadratic bicyclic number field, we denote by}

\noindent $\bullet$ $k_i=\SQ(\sqrt{d_i})$ ($i=1,2,3$) its quadratic subfields,

\noindent $\bullet$ $G=\Gal(K/\SQ)=\{id_K,\sigma_1,\sigma_2,\sigma_3) \textrm{ where }\sigma_{i\vert k_i}=id_{k_i},$

\noindent $\bullet$ $q_K=(\Oo_K^\times:\Oo_{k_1}^\times\Oo_{k_2}^\times\Oo_{k_3}^\times)$ its unit index,

\noindent $\bullet$ $\e_i$ the fundamental unit $>1$ of $k_i$ when $k_i$ is real,

\noindent $\bullet$ $\nu_i=1$ or $0$ according to the fact that $k_i$ is real and $N_{k_i/\SQ}(\e_i)=1$ or not,

\noindent $\bullet$ $\nu_K$ the sum $\sum_{i=1}^3\nu_i$,

\noindent $\bullet$ $s_K$ (resp., $s_i$) the number of ramified primes in $K$ (resp., $k_i$),

\noindent $\bullet$ $i_2=1$ or 0 according to the fact that $e_2(K/\SQ)=4$ or not. 

\noindent $\,\bullet$ $j_2=1$ or $0$ according to the fact that $e_2(K/\SQ)=4$ and $\Pi_2(K)$ is not principal or not.

\smallskip

\noindent {\bf Kubota's Technical Notations.}

\noindent $\bullet$ $\Oo_{k_i}^*=\{\eta\in\Oo_{k_i}^\times\mid N_{k_i/\SQ}(\eta)=+1\}$. If $k_i\not\subset\SR$, then $\Oo_{k_i}^*=\Oo_{k_i}^\times$

\noindent $\bullet$ $\Oo_K^*=\left\{\begin{array}{ll}\{\eta\in\Oo_{k_1}^\times\Oo_{k_2}^\times\Oo_{k_3}^\times\mid \eta \textrm{ totally}>0 \textrm{ or} <0\}&\textrm{if }K\subset\SR\\ \;\;\Oo_{k_1}^*\Oo_{k_2}^*\Oo_{k_3}^*&\textrm{if }K\not\subset\SR\end{array}\right.$

\medskip

Clearly, we have:
$\Gal(k_i/\SQ)=\{id_{k_i},\sigma_{j \vert k_i}\}$ and $N_{k_i/\SQ}(\e_i)=\e_i\cdot \sigma_j(\e_i)$ for $j\not=i.$
Moreover, $s_1+s_2+s_3=2s_K+i_2$ since a prime ramified in $K$ is ramified in exactly 2 quadratic subfields except 2 which is ramified in the 3 subfields when $i_2=1$. 
\smallskip

If $k\subset K$, from the extension morphism
$j_{k}^K:\aa\in \Ii_{k} \mapsto \aa\Oo_K\in \Ii_K,$
we deduce the following morphism of groups
$$\psi_1:(\aa_1,\aa_2,\aa_3)\in \Ii_{k_1}\times\Ii_{k_2}\times\Ii_{k_3} \mapsto \aa_1\aa_2\aa_3\Oo_K\in \Ii_K ,$$ 
which, by restriction, leads to another morphism
$$\psi_0:(\aa_1,\aa_2,\aa_3)\in \Ii_{k_1}^G\times\Ii_{k_2}^G \times\Ii_{k_3}^G \mapsto \aa_1\aa_2\aa_3\Oo_K\in \Ii_K^G,$$ 
which itself induces a natural morphism:
$$\psi:\Pp o(k_1)\times \Pp o(k_2)\times \Pp o(k_3)\to \Pp o(K).$$

To compute the cardinality of the P\'olya group $\Pp o(K)$ of the biquadratic number field $K$, we use the classical isomorphism
$\textrm{Im}\,\psi\simeq (\Pp o(k_1)\times \Pp o(k_2)\times \Pp o(k_3))\,/\,\Ker\,\psi.$
Thus, 
$$\vert \Pp o(K)\vert =\vert \Ima\;\psi\vert\times\vert \Coker\;\psi\vert = \vert\Pp o(k_1)\times \Pp o(k_2)\times \Pp o(k_3)\vert \times\frac{\vert \Coker\;\psi\vert }{\vert \Ker\;\psi\vert}$$
Hilbert~\cite[Theorems 105 and 106]{bib:hilbert}, gave the cardinality of the groups $\Pp o(k_i):$
\be\label{eq:I} \vert\Pp o(k_i)\vert = 2^{s_i-1-\nu_i}.\ee 
By~\cite[Proposition 3.1]{bib:chabert2023}, we know also the cardinality of the group $\Coker\;\psi:$
\be\label{eq:II} \Coker\;\psi=\Pp o(K)/\Ima\;\psi\simeq \langle\,\overline{\Pi_2(K)}/\overline{\Pi_2(K)}^2\,\rangle.\ee 
Therefore, $\vert \Coker\;\psi\vert = 2^j$, that is, 2 or 1 according to the fact that 2 is totally ramified in $K$ and $\Pi_2(K)$ is not principal or not.
Thus, it remains to compute the cardinality of $\Ker\,\psi$. Kubota gave this cardinality.

\begin{theorem}\cite[Kubota, Satz 4]{bib:kubota1956} or  \cite[Lemmermeyer, p. 252, line 6]{bib:lemmermeyer1994}.

\noindent The kernel of $\psi:\Pp o(k_1)\times \Pp o(k_2)\times \Pp o(k_3)\to \Pp o(K)$ has the following order:
\be\label{eq:III} \vert \Ker\,\psi\vert =\left\{\begin{array}{cl}\frac{1}{q_K}\prod_pe_p(K/\SQ) & \textrm{if } K \textrm{ is real and } \nu_1=\nu_2=\nu_3=0 \\ \frac{1}{2q_K}\prod_pe_p(K/\SQ) & \textrm{else}. \end{array}\right.\ee
\end{theorem}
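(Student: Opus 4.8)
The plan is to express $|\Ker\psi|$ as an index, kill off the ``obvious'' part of that index coming from rational ideals, and reduce the theorem to a purely unit–theoretic statement.

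\textbf{Step 1 (reduction to an index of finite $2$-groups).} Put $B=\psi_0^{-1}(\Pp_K^{G})$; then $\Ker\psi=B/(\Pp_{k_1}^{G}\times\Pp_{k_2}^{G}\times\Pp_{k_3}^{G})$, so $|\Ker\psi|=[B:\prod_i\Pp_{k_i}^{G}]$. For a number field $k$ write $\Ll_k=\langle\, p\Oo_k\mid p\text{ a rational prime}\,\rangle\subseteq\Ii_k^{G}$. Then $\Ll_{k_i}\subseteq\Pp_{k_i}^{G}$, $\Ll_K\subseteq\Pp_K^{G}$ and $\psi_0(\Ll_{k_1}\times\Ll_{k_2}\times\Ll_{k_3})=\Ll_K$, so $Q:=\Ll_{k_1}\times\Ll_{k_2}\times\Ll_{k_3}$ sits inside both $B$ and $\prod_i\Pp_{k_i}^{G}$. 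Since $\Ii_{k_i}^{G}/\Ll_{k_i}\cong(\SZ/2)^{s_i}$ (generated by the classes of the primes ramified in $k_i$) and $[\Ii_{k_i}^{G}:\Pp_{k_i}^{G}]=2^{s_i-1-\nu_i}$ by \eqref{eq:I}, we get $[\Pp_{k_i}^{G}:\Ll_{k_i}]=2^{1+\nu_i}$, hence $[\prod_i\Pp_{k_i}^{G}:Q]=2^{3+\nu_K}$. Therefore
$$|\Ker\psi|=\frac{[B:Q]}{2^{3+\nu_K}},$$
and $[B:Q]=|\overline B|$ is the order of the finite $2$-group $\overline B:=B/Q\subseteq\prod_i(\SZ/2)^{s_i}$, whose elements are exactly the triples $(\overline{\aa_1},\overline{\aa_2},\overline{\aa_3})$ of products of distinct primes ramified in the $k_i$ for which $\aa_1\aa_2\aa_3\Oo_K$ is a principal ideal of $K$ (a condition independent of the choice of lifts, since rational ideals are principal).

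\textbf{Step 2 (splitting $\overline B$ through $\psi_0$).} The map $\psi_0$ induces $\overline\psi_0\colon\prod_i(\Ii_{k_i}^{G}/\Ll_{k_i})\to\overline{\Ii}_K^{G}:=\Ii_K^{G}/\Ll_K$, and the class map $\Ii_K\to\Cc l(K)$ induces $\kappa\colon\overline{\Ii}_K^{G}\to\Cc l(K)$ with $\Ker\kappa=\Pp_K^{G}/\Ll_K$. By the description in Step 1, $\overline B=\overline\psi_0^{-1}(\Ker\kappa)$, hence $|\overline B|=|\Ker\overline\psi_0|\cdot\bigl|\,\Ima\overline\psi_0\cap\Ker\kappa\,\bigr|$. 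The first factor is computed prime by prime, writing $\pp_{i,p}$ for the prime of $k_i$ above a ramified prime $p$: a prime $p$ with $e_p(K/\SQ)=2$ ramifies in exactly two of the $k_i$, say $k_i$ and $k_{i'}$, and there $\pp_{i,p}\Oo_K=\pp_{i',p}\Oo_K=\Pi_{p^{f}}(K)$ with $\Pi_{p^{f}}(K)^{2}=p\Oo_K$, contributing a factor $\SZ/2$ to $\Ker\overline\psi_0$; the prime $2$ when $e_2(K/\SQ)=4$ ramifies in all three subfields, with $\pp_{i,2}\Oo_K=\Pi_2(K)^{2}$ for $i=1,2,3$ and $\Pi_2(K)^{4}=2\Oo_K$, contributing a factor $(\SZ/2)^{2}$. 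Using $s_1+s_2+s_3=2s_K+i_2$ this gives
$$|\Ker\overline\psi_0|=2^{\,s_K-i_2}\cdot2^{\,2i_2}=2^{\,s_K+i_2}=\prod_pe_p(K/\SQ).$$

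\textbf{Step 3 (the unit-theoretic term).} Substituting Step 2 into $|\Ker\psi|=|\Ker\overline\psi_0|\cdot\bigl|\Ima\overline\psi_0\cap\Ker\kappa\bigr|/2^{3+\nu_K}$, the theorem becomes equivalent to
$$\bigl|\,\Ima\overline\psi_0\cap\Ker\kappa\,\bigr|=\begin{cases}2^{\,3+\nu_K}/q_K&\text{if }K\text{ is real and }\nu_1=\nu_2=\nu_3=0,\\[3pt]2^{\,2+\nu_K}/q_K&\text{otherwise}.\end{cases}$$
Here $\Ima\overline\psi_0\cap\Ker\kappa=\overline\psi_0(\overline B)$ is the group of classes modulo $\Ll_K$ of the principal ideals of $K$ of the shape $\aa_1\aa_2\aa_3\Oo_K$. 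To evaluate it I would first identify, for $k\in\{k_1,k_2,k_3,K\}$, the group $\Pp_k^{G}/\Ll_k$ with $H^{1}(\Gal(k/\SQ),\Oo_k^{\times})$ through Hilbert's Theorem~90 (a $G$-stable principal ideal $(\gamma)_k$ corresponding to the class of $\sigma\mapsto\sigma(\gamma)/\gamma$, rational generators being cohomologically trivial); concretely, $\Pp_{k_i}^{G}/\Ll_{k_i}$ is generated by the class of $(\sqrt{d_i})$ together with, when $\nu_i=1$, the class of $(\beta_i)$ for a Hilbert~90 representative $\beta_i$ of $\e_i$. Transporting these to $\overline{\Ii}_K^{G}$, the three classes $\overline{(\sqrt{d_i})_K}$ already satisfy $\overline{(\sqrt{d_1})_K}\,\overline{(\sqrt{d_2})_K}\,\overline{(\sqrt{d_3})_K}=1$ because $\sqrt{d_1}\sqrt{d_2}\sqrt{d_3}\in\SQ^{\times}$; every further collapse among the generators of the image of $\prod_i(\Pp_{k_i}^{G}/\Ll_{k_i})$, and the passage from that image to the full group $\overline\psi_0(\overline B)$, is governed by the units of $K$ lying outside $\Oo_{k_1}^{\times}\Oo_{k_2}^{\times}\Oo_{k_3}^{\times}$, that is, by the index $q_K$ and by the sign data packaged in Kubota's groups $\Oo_{k_i}^{*}$, $\Oo_K^{*}$. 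Running this bookkeeping, and isolating the case where complex conjugation acts trivially on $K$ and each $\e_i$ has norm $-1$ (equivalently: $K$ is real and the negative Pell equation $x^{2}-d_iy^{2}=-1$ is solvable for every $i$), which is exactly when the relevant ``sign freedom'' is already realised inside $\Oo_{k_1}^{\times}\Oo_{k_2}^{\times}\Oo_{k_3}^{\times}$, from all other cases, yields the two displayed values, hence \eqref{eq:III}.

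\textbf{Main obstacle.} Steps 1 and 2 are formal: bookkeeping with free and finite abelian groups plus the elementary decomposition theory of primes in $K/\SQ$. The heart of the matter, where Kubota's argument must really be reproduced, is Step 3: converting the ideal count $\bigl|\Ima\overline\psi_0\cap\Ker\kappa\bigr|$ into the unit index $q_K$ and the invariants $\nu_i$. This requires an explicit dictionary between $\Oo_K^{\times}/(\Oo_{k_1}^{\times}\Oo_{k_2}^{\times}\Oo_{k_3}^{\times})$ --- together with the data of which fundamental units have norm $+1$ and whether $K$ is real --- and the kernel of $\prod_i(\Pp_{k_i}^{G}/\Ll_{k_i})\to\overline{\Ii}_K^{G}$; the dichotomy in \eqref{eq:III} is precisely the shadow of complex conjugation and of the solvability of these negative Pell equations.
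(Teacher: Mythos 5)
Your Steps 1 and 2 are correct, and they amount to a repackaging of the easy half of the paper's argument: your quantity $\bigl|\Ima\overline\psi_0\cap\Ker\kappa\bigr|$ is exactly the index $(H_2:H_0)=\bigl(\Pp_K\cap\Ima\,\psi_0 : j_{\SQ}^K(\Pp_{\SQ})\bigr)$ from the paper's filtration $H_0\subseteq H_1\subseteq H_2\subseteq H_3$ of $\Ii_K^G$, and your count $|\Ker\overline\psi_0|=2^{s_K+i_2}$ together with the index $2^{3+\nu_K}$ plays the role of the paper's $(H_3:H_0)=2^{s_K}$ combined with Hilbert's formula $|\Pp o(k_i)|=2^{s_i-1-\nu_i}$. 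So the reduction is sound and arithmetically consistent with the stated theorem.

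The problem is that Step 3 is not a proof but a plan, and it is precisely where the entire content of Kubota's Satz 4 lives. To convert $(H_2:H_0)$ into $2^{2+\nu_K}/q_K$ (resp.\ $2^{3+\nu_K}/q_K$) one needs, concretely: (i) the map $\rho\Oo_K\mapsto \rho^2/r \bmod \pm(\Oo_K^\times)^{(2)}$ identifying $H_2/H_1$ with $\Oo_K^*/\pm(\Oo_K^\times)^{(2)}$, which rests on the characterization (Kubota's Hilfssatz~4) that a unit $\eta\in\Oo_K^\times$ equals $\rho^2/r$ with $\rho\in\Oo_K$, $r\in\SQ$ \emph{if and only if} $\eta\in\Oo_K^*$; (ii) the hardest sub-case of that characterization, namely $K$ totally real with $N_{k_i/\SQ}(\e_i)=-1$ for all $i$, where one must exhibit $\e_1\e_2\e_3$ in the form $\rho^2/r$ by showing that $\SQ(\sqrt{\e_1\e_2\e_3})/\SQ$ is Galois of degree $8$ and exponent $2$ and extracting a rational quadratic subfield --- this is exactly the case responsible for the dichotomy in \eqref{eq:III}, which your sketch merely asserts is ``the relevant sign freedom''; (iii) the computation $(H_1:H_0)=4$ or $2$ according to whether $\sqrt{-1}\notin K$ or not, and $(\Oo_K^\times:\pm(\Oo_K^\times)^{(2)})=8,4,2$ in the three cases; and (iv) the comparison $(\Oo_{k_1}^\times\Oo_{k_2}^\times\Oo_{k_3}^\times:\Oo_K^*)=\prod_i(\Oo_{k_i}^\times:\Oo_{k_i}^*)\times(\tfrac12\text{ or }1)$, which is what lets $q_K=(\Oo_K^\times:\Oo_{k_1}^\times\Oo_{k_2}^\times\Oo_{k_3}^\times)$ enter the final formula at all. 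Your appeal to $H^1(\Gal(k/\SQ),\Oo_k^\times)$ and ``running this bookkeeping'' does not establish any of these four points; in particular nothing in your sketch rules out or confirms the potential collapse coming from units of $K$ outside $\Oo_{k_1}^\times\Oo_{k_2}^\times\Oo_{k_3}^\times$, which is the whole reason $q_K$ appears. As written, the proposal proves only the reduction, not the theorem.
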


Recall that $\left\{\begin{array}{l} q_K\,\vert\, 4\textrm{ if }K\subset \SR\\ q_K\,\vert\, 2 \textrm{ if } K\not\subset\SR
\end{array}\right\}$.
Formulas (\ref{eq:I}), (\ref{eq:II}), and (\ref{eq:III}) lead to:

\begin{corollary}\cite[Theorem 3.3]{bib:chabert2023}
\be \label{eq:55b} \vert \Pp o(K)\vert = \left\{\begin{array}{ll}q_K\times 2^{s_K+j_2-2-\max(1,\nu_K)}& \textrm{if } K \textrm{ is real}\\ q_K\times 2^{s_K+j_2-2-\nu_K}& \textrm{if } K \textrm{ is imaginary} \end{array}\right.\ee 

\end{corollary}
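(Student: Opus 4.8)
The plan is to substitute the three cardinalities (\ref{eq:I}), (\ref{eq:II}) and (\ref{eq:III}) into the master identity
$$\vert\Pp o(K)\vert=\vert\Pp o(k_1)\times\Pp o(k_2)\times\Pp o(k_3)\vert\times\frac{\vert\Coker\,\psi\vert}{\vert\Ker\,\psi\vert}$$
and to reduce the right-hand side to a single power of $2$ times $q_K$. First I would collect the three factors coming from (\ref{eq:I}): their product is $2^{(s_1+s_2+s_3)-3-\nu_K}$, and the bookkeeping identity $s_1+s_2+s_3=2s_K+i_2$ recalled above rewrites this as $2^{2s_K+i_2-3-\nu_K}$. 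By (\ref{eq:II}) the cokernel supplies the factor $2^{j_2}$, so the only remaining unknown is $\vert\Ker\,\psi\vert$, which by (\ref{eq:III}) requires a closed form for $\prod_p e_p(K/\SQ)$.

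The main substantive input, beyond the cited formulas, is the evaluation of this ramification product. Here I would invoke the ramification structure of a biquadratic bicyclic field: since the inertia group at any prime sits inside $G\cong(\SZ/2\SZ)^2$, a ramified prime satisfies $e_p(K/\SQ)\in\{2,4\}$, and the value $4$ can occur only for $p=2$, namely exactly when $i_2=1$. Thus, among the $s_K$ ramified primes, the totally ramified one (present only when $i_2=1$) contributes $4=2^2$ while each of the remaining primes contributes $2$; when $i_2=0$ all $s_K$ ramified primes contribute $2$. In both cases
$$\prod_p e_p(K/\SQ)=2^{s_K+i_2}.$$
Substituting into (\ref{eq:III}) gives $\vert\Ker\,\psi\vert=2^{s_K+i_2}/q_K$ when $K$ is real with $\nu_1=\nu_2=\nu_3=0$, and $\vert\Ker\,\psi\vert=2^{s_K+i_2-1}/q_K$ otherwise.

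It then remains to assemble the pieces, which is pure exponent arithmetic. I would note that the factor $2^{i_2}$ cancels between the product term $2^{2s_K+i_2-3-\nu_K}$ and the denominator $\vert\Ker\,\psi\vert$, so the final exponent carries no trace of $i_2$. Performing the division in each branch of (\ref{eq:III}) yields $\vert\Pp o(K)\vert=q_K\cdot 2^{s_K+j_2-3}$ in the real case with $\nu_K=0$, and $\vert\Pp o(K)\vert=q_K\cdot 2^{s_K+j_2-2-\nu_K}$ in every other case. The last step, and the only place where care is genuinely needed, is to match this trichotomy to the dichotomy of the corollary: for real $K$ the subcases $\nu_K=0$ and $\nu_K\ge 1$ are unified by writing the exponent as $s_K+j_2-2-\max(1,\nu_K)$, since $\max(1,0)=1$ recovers $s_K+j_2-3$ while $\max(1,\nu_K)=\nu_K$ for $\nu_K\ge 1$; for imaginary $K$ one always lands in the second branch of (\ref{eq:III}) and obtains $s_K+j_2-2-\nu_K$, which is exactly the imaginary formula. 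This establishes (\ref{eq:55b}).
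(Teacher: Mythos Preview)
Your proof is correct and follows exactly the route the paper indicates: the paper merely states that ``Formulas (\ref{eq:I}), (\ref{eq:II}), and (\ref{eq:III}) lead to'' the corollary, and you have faithfully carried out this substitution, including the auxiliary evaluation $\prod_p e_p(K/\SQ)=2^{s_K+i_2}$ (which the paper leaves implicit here but obtains in the course of proving Satz~4) and the repackaging of the real subcases via $\max(1,\nu_K)$.
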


In fact, the most difficult part is Kubota's formula whose proof is difficult to read and our aim here is to give an easier formulation of Kubota's proof itself.

\section{Preliminary lemmas}

The aim of these lemmas is to extend to biquadratic fields the following result about quadratic fields.

\begin{lemma}[Kubota's Hilfssatz 1]\label{lem:ein} Let $k$ be a quadratic number field and $\eta\in\Oo_k^\times$. One can write $\eta=\frac{\rho^2}{r}$ where $\rho\in\Oo_k$ and $r\in\SQ$ if and only if $N_{k/\SQ}(\eta)=+1$.  
\end{lemma}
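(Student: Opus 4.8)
The plan is to prove the two implications separately, the easy one first. Suppose $\eta=\rho^2/r$ with $\rho\in\Oo_k$ and $r\in\SQ^\times$. Applying $N_{k/\SQ}$ and using that $N_{k/\SQ}$ is multiplicative and fixes $\SQ$ (where it acts as squaring), we get $N_{k/\SQ}(\eta)=N_{k/\SQ}(\rho)^2/r^2$, which is a square of a rational number; but $\eta$ is a unit, so $N_{k/\SQ}(\eta)=\pm1$, and the only possibility compatible with being a rational square is $N_{k/\SQ}(\eta)=+1$.

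For the converse, assume $N_{k/\SQ}(\eta)=+1$. Write $G_k=\{id,\tau\}$, so $N_{k/\SQ}(\eta)=\eta\cdot\tau(\eta)=1$, i.e. $\tau(\eta)=\eta^{-1}$. The idea is to produce $\rho$ by a Hilbert 90 type argument made concrete. Consider $\rho:=1+\eta$ (or more generally $a+b\eta$ for suitable $a,b\in\SZ$); then $\tau(\rho)=1+\tau(\eta)=1+\eta^{-1}=\eta^{-1}(\eta+1)=\eta^{-1}\rho$, so $\rho\,\tau(\rho)^{-1}=\eta$ whenever $\rho\neq0$. Now $\rho^2=\rho\cdot\rho$ and $\rho\,\tau(\rho)=N_{k/\SQ}(\rho)\in\SQ$; hence $\eta=\rho/\tau(\rho)=\rho^2/(\rho\,\tau(\rho))=\rho^2/N_{k/\SQ}(\rho)$, which is exactly the desired form with $r=N_{k/\SQ}(\rho)\in\SQ$. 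The only thing to check is that we can choose the construction so that $\rho\neq0$: if $1+\eta=0$ then $\eta=-1$, and then one takes instead $\rho=1-\eta=2\neq0$, for which $\tau(\rho)=1-\eta^{-1}=1-(-1)=2$ and indeed $\rho^2/N_{k/\SQ}(\rho)=4/4=1=\eta$ fails since $\eta=-1$; so in that degenerate case one uses directly $\eta=-1$ is impossible because $N_{k/\SQ}(-1)=1$ in a quadratic field — wait, that gives norm $1$, so $\eta=-1$ is allowed. Then $\eta=-1=\rho^2/r$ needs $\rho^2=-r$, e.g. $\rho=\sqrt{d}$ (if $k=\SQ(\sqrt d)$), $r=-d$: here $\rho\in\Oo_k$ and $r\in\SQ$, so the statement still holds. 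More uniformly: since $k=\SQ(\sqrt d)$, write $\eta=x+y\sqrt d$ with $x,y\in\SQ$ and $N_{k/\SQ}(\eta)=x^2-dy^2=1$; then $\rho:=1+\eta=(1+x)+y\sqrt d$ works unless $x=-1,y=0$, i.e. $\eta=-1$, and that single case is handled by $\rho=\sqrt d$, $r=-d$.

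So the key step is the explicit Hilbert 90 computation $\tau(1+\eta)=\eta^{-1}(1+\eta)$, which immediately gives $\eta=\rho^2/N_{k/\SQ}(\rho)$; the only real obstacle is the bookkeeping for the degenerate case $1+\eta=0$, which is a single unit $\eta=-1$ and is disposed of by hand. I would present the argument in this order: (1) the trivial direction via applying the norm; (2) the reduction $N_{k/\SQ}(\eta)=1\iff\tau(\eta)=\eta^{-1}$; (3) the choice $\rho=1+\eta$ and the identity $\eta\,\tau(\rho)=\rho$, hence $\eta=\rho^2/N_{k/\SQ}(\rho)$; (4) the exceptional case $\eta=-1$. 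I expect step (3) to be entirely routine and step (4) to be the only place requiring a moment's care.
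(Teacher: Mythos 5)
Your proof is correct and follows essentially the same route as the paper: the forward direction by applying the norm and observing that a unit of positive norm has norm $+1$, and the converse via Hilbert 90. The only difference is that you unpack Hilbert 90 explicitly with $\rho=1+\eta$ (disposing of the degenerate case $\eta=-1$ by hand), whereas the paper invokes the theorem as a black box; your version has the minor advantage of making it immediate that $\rho$ lies in $\Oo_k$ rather than merely in $k^\times$.
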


\begin{proof}
The condition is necessary since $\eta=\frac{\rho^2}{r}$ implies $N_{k/\SQ}(\eta)=\frac{(N_{k/\SQ}(\rho))^2}{r^2}>0$. Conversely, assuming that $N_{k/\SQ}(\eta)=+1$, Hilbert 90 says that there exists $\rho\in \Oo_k$ such that $\eta=\frac{\rho}{\sigma(\rho)}=\frac{\rho^2}{N_{k/\SQ}(\rho)}=\frac{\rho^2}{r}$ where $\sigma$ is the non-trivial automorphism of $k$ and $r\in\SQ$.
\end{proof}

\begin{lemma}\label{lemA}
Assume that $K\subset\SR$ and let  $\eta=sgn(\eta)\, \e_1^{m_1}\e_2^{m_2}\e_3^{m_3}\in\Oo_{k_1}^\times\Oo_{k_2}^\times\Oo_{k_3}^\times$ where $sgn(x)$ denotes the sign of $x$ and $m_1,m_2,m_3\in\SZ$. Then, letting $\lambda_i=N_{k_i/\SQ}(\e_i)$, one has
$$\eta\in\Oo_K^*\;\Leftrightarrow\;\lambda_1^{m_1}=\lambda_2^{m_2}=\lambda_3^{m_3}.$$	
\end{lemma}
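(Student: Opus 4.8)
The plan is to read the defining condition of $\Oo_K^*$ directly off the real embeddings of $K$ and then to evaluate the relevant signs via the Galois action on the fundamental units.

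Since $K\subset\SR$ and $K/\SQ$ is Galois, $K$ is totally real, and, fixing the identity inclusion $\iota:K\hookrightarrow\SR$, the four real embeddings of $K$ are exactly $\iota,\ \iota\circ\sigma_1,\ \iota\circ\sigma_2,\ \iota\circ\sigma_3$. Any element of $\Oo_{k_1}^\times\Oo_{k_2}^\times\Oo_{k_3}^\times$ — and our $\eta$, being of the form $sgn(\eta)\,\e_1^{m_1}\e_2^{m_2}\e_3^{m_3}$, is visibly such an element — therefore lies in $\Oo_K^*$ if and only if the four real numbers $\eta,\ \sigma_1(\eta),\ \sigma_2(\eta),\ \sigma_3(\eta)$ all have the same sign. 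Because each $\e_i>1>0$ and $sgn(\eta)$ is rational, $\eta$ itself has sign $sgn(\eta)$, so membership in $\Oo_K^*$ reduces to the requirement $\sigma_j\!\left(\e_1^{m_1}\e_2^{m_2}\e_3^{m_3}\right)>0$ for $j=1,2,3$.

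The key input is the sign of $\sigma_j(\e_i)$. For $j=i$ one has $\sigma_i(\e_i)=\e_i>0$, since $\sigma_{i\vert k_i}=id_{k_i}$. For $j\neq i$, $\sigma_{j\vert k_i}$ is the nontrivial automorphism of $k_i$, and the identity recalled in the introduction gives $\sigma_j(\e_i)=N_{k_i/\SQ}(\e_i)/\e_i=\lambda_i/\e_i$, which has sign $\lambda_i=\pm1$. Substituting, $sgn\,\sigma_1\!\left(\e_1^{m_1}\e_2^{m_2}\e_3^{m_3}\right)=\lambda_2^{m_2}\lambda_3^{m_3}$, and cyclically $\lambda_1^{m_1}\lambda_3^{m_3}$ and $\lambda_1^{m_1}\lambda_2^{m_2}$ for $j=2,3$. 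Hence $\eta\in\Oo_K^*$ if and only if $\lambda_2^{m_2}\lambda_3^{m_3}=\lambda_1^{m_1}\lambda_3^{m_3}=\lambda_1^{m_1}\lambda_2^{m_2}=1$. Since each $\lambda_i^{m_i}$ is its own inverse in $\{\pm1\}$, this set of equalities is equivalent to $\lambda_1^{m_1}=\lambda_2^{m_2}=\lambda_3^{m_3}$, which settles both implications simultaneously.

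I do not anticipate a genuine obstacle: the argument is a short sign computation. The only points demanding care are the correct identification of the archimedean places of $K$ with the elements $\iota\circ\sigma_j$, and the bookkeeping of which restriction $\sigma_{j\vert k_i}$ is trivial; once those are in place, everything follows from $\e_i>0$ and $N_{k_i/\SQ}(\e_i)=\e_i\sigma_j(\e_i)$.
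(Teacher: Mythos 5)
Your proof is correct and follows essentially the same route as the paper's: compute $sgn(\sigma_j(\eta))$ from $\sigma_j(\e_i)=\lambda_i/\e_i$, obtaining $sgn(\sigma_1(\eta))=sgn(\eta)\,\lambda_2^{m_2}\lambda_3^{m_3}$ and its cyclic analogues, and then observe that the resulting sign conditions amount to $\lambda_1^{m_1}=\lambda_2^{m_2}=\lambda_3^{m_3}$. The only difference is that you spell out the identification of the archimedean places with $\iota\circ\sigma_j$, which the paper leaves implicit.
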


\begin{proof}
Since, for $i\not= j$, $\sigma_j(\e_i)=\frac{\lambda_i}{\e_i}$, we have
$$\sigma_1(\eta)=sgn(\eta) \e_1^{m_1}\lambda_2^{m_2}\e_2^{-m_2}\lambda_3^{m_3}\e_3^{-m_3}, \textrm{ and hence, } sgn(\sigma_1(\eta))=sgn(\eta)\,\lambda_2^{m_2}\lambda_3^{m_3}.$$
Thus, $sgn(\sigma_1(\eta))=sgn(\eta)\;\Leftrightarrow \; \lambda_2^{m_2}\lambda_3^{m_3}=1\;\Leftrightarrow \; \lambda_2^{m_2}=\lambda_3^{m_3}.$ By considering one the two other analogous equivalences, we can conclude.
\end{proof}

 \begin{lemma}[Kubota's Hilfssatz 2]$\empty$\label{HS2}
 \noindent $\bullet$ If $K$ is real and if $\lambda_1=\lambda_2=\lambda_3=-1$, then $$\Oo_K^*\simeq \Oo_{k_1}^*\Oo_{k_2}^* \Oo_{k_3}^* \times \SZ/2\SZ.$$ 
 \noindent $\bullet$ Else, $\Oo_K^*= \Oo_{k_1}^*\Oo_{k_2}^* \Oo_{k_3}^*.$
 Moreover, 
\be \label{eq:A} (\Oo_{k_1}^\times\Oo_{k_2}^\times\Oo_{k_3}^\times : \Oo_K^*)=\prod_{i=1}^3(\Oo_{k_i}^\times : \Oo_{k_i}^*)\left\{\begin{array}{ll}\times \frac{1}{2} &\textrm{if } K\subset \SR \textrm{ and } \lambda_i=-1\,(\forall\,i)\\ \times 1 & else\end{array}\right.\ee 
 \end{lemma}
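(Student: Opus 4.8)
The plan is to locate $\Oo_K^*$ precisely between the two groups $V^*:=\Oo_{k_1}^*\Oo_{k_2}^*\Oo_{k_3}^*$ and $V:=\Oo_{k_1}^\times\Oo_{k_2}^\times\Oo_{k_3}^\times$ and then to split the index in (\ref{eq:A}) as $(V:\Oo_K^*)=(V:V^*)/(\Oo_K^*:V^*)$; this is legitimate because $V^*\subseteq\Oo_K^*\subseteq V$ — a unit of $k_i$ of norm $+1$ is totally positive or totally negative in $k_i$, hence in $K$, so it lies in $\Oo_K^*$, and the inclusion $\Oo_K^*\subseteq V$ is part of the definition. I would first record the structure of the unit groups: $\Oo_{k_i}^\times=\langle -1,\e_i\rangle$ when $k_i$ is real and $\Oo_{k_i}^\times$ is a finite (cyclic) group of roots of unity when $k_i$ is imaginary; the standard fact — provable by applying the $\sigma_j$ exactly as in the proof of Lemma \ref{lemA} — that any relation $\nu_1\nu_2\nu_3=1$ with $\nu_i\in\Oo_{k_i}^\times$ forces $\nu_i\in\{\pm 1\}$ for every $i$ (in the real case this is multiplicative independence of $\e_1,\e_2,\e_3$; in the imaginary case, where exactly one $k_i$ is real, one instead uses that two distinct imaginary quadratic fields have only $\pm1$ as common roots of unity and that a real quadratic field has no root of unity besides $\pm1$).

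Step A is the identity $(V:V^*)=\prod_{i=1}^3(\Oo_{k_i}^\times:\Oo_{k_i}^*)$. The tautological surjection $\prod_i\bigl(\Oo_{k_i}^\times/\Oo_{k_i}^*\bigr)\twoheadrightarrow V/V^*$ must be shown to be injective: given $\mu_1\mu_2\mu_3\in V^*$ with $\mu_i\in\Oo_{k_i}^\times$, write $\mu_1\mu_2\mu_3=\theta_1\theta_2\theta_3$ with $\theta_i\in\Oo_{k_i}^*$ and apply the relation fact to $\nu_i:=\mu_i\theta_i^{-1}$; this gives $\nu_i\in\{\pm1\}\subseteq\Oo_{k_i}^*$, hence $\mu_i\in\Oo_{k_i}^*$. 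When $K\not\subset\SR$ this already finishes the proof: $\Oo_K^*=V^*$ by definition, so the first two bullets hold trivially and (\ref{eq:A}) holds with the factor $1$.

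Step B is the case $K\subset\SR$, where all three $k_i$ are real. By Lemma \ref{lemA}, $\Oo_K^*$ consists of the elements $sgn(\eta)\,\e_1^{m_1}\e_2^{m_2}\e_3^{m_3}$ with $\lambda_1^{m_1}=\lambda_2^{m_2}=\lambda_3^{m_3}$, while $V^*$ is the subset obeying $\lambda_i^{m_i}=1$ for every $i$ (because $\Oo_{k_i}^*=\langle -1,\e_i\rangle$ when $\lambda_i=1$ and $\Oo_{k_i}^*=\langle -1,\e_i^2\rangle$ when $\lambda_i=-1$). If at least one $\lambda_i$ equals $+1$, the common value of the $\lambda_j^{m_j}$ is forced to be $1$, so $\Oo_K^*=V^*$; with Step A this gives the equality in the second bullet and (\ref{eq:A}) with factor $1$. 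If $\lambda_1=\lambda_2=\lambda_3=-1$, the defining condition becomes ``$m_1,m_2,m_3$ all of the same parity'', and comparing exponents (via multiplicative independence) one gets $\Oo_K^*=V^*\;\sqcup\;(\e_1\e_2\e_3)V^*$ with $(\e_1\e_2\e_3)^2=\e_1^2\e_2^2\e_3^2\in V^*$; hence $(\Oo_K^*:V^*)=2$ and $\Oo_K^*/V^*\simeq\SZ/2\SZ$, which is the content of the first bullet, and $(V:\Oo_K^*)=\frac12(V:V^*)=\frac12\prod_i(\Oo_{k_i}^\times:\Oo_{k_i}^*)$, i.e.\ (\ref{eq:A}) with factor $\frac12$.

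The only ingredient that is not routine bookkeeping is the ``relation fact'' behind Step A, namely the multiplicative independence of the fundamental units $\e_1,\e_2,\e_3$ of the real quadratic subfields (together with its easy analogue for roots of unity when $K$ is imaginary); once it is in place, the rest is the sign/parity analysis inherited from Lemma \ref{lemA} applied across the inclusions $V^*\subseteq\Oo_K^*\subseteq V$.
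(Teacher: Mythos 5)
Your proof is correct and takes essentially the same route as the paper's: the real case is settled by the sign/parity analysis of Lemma~\ref{lemA}, and Formula~(\ref{eq:A}) is reduced to the identity $(\Oo_{k_1}^\times\Oo_{k_2}^\times\Oo_{k_3}^\times:\Oo_{k_1}^*\Oo_{k_2}^*\Oo_{k_3}^*)=\prod_{i=1}^3(\Oo_{k_i}^\times:\Oo_{k_i}^*)$, proved by computing the kernel of the natural surjection. Your explicit ``relation fact'' (multiplicative independence of $\e_1,\e_2,\e_3$, with the roots-of-unity analogue in the imaginary case) is precisely the ingredient the paper leaves implicit when it asserts that ``the first part of the proof shows that, for every $i$, $\eta_i\in\Oo_{k_i}^*$'', so making it explicit is a welcome clarification rather than a deviation.
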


\begin{proof}
$\bullet$ Assume that $K\subset\SR$ and let  $\eta=sgn(\eta)\, \e_1^{m_1}\e_2^{m_2}\e_3^{m_3}\in\Oo_{k_1}^\times\Oo_{k_2}^\times\Oo_{k_3}^\times$.

$\circ$ If $\lambda_1=\lambda_2=\lambda_3=-1,$ by Lemma~\ref{lemA}, $\eta\in\Oo_K^*\;\Leftrightarrow\;m_1\equiv m_2\equiv m_3\pmod{2}\;$

\smallskip

$\Leftrightarrow\;$ The $m_i$'s are either all even or all odd

 $\Leftrightarrow\;\eta=\eta_1\eta_2\eta_3(\e_1\e_2\e_3)^m$
  where $\eta_i\in \Oo_{k_i}^*$  and $m=0$ or $1$ 
 
 \smallskip
 
 $\Leftrightarrow\;\Oo_K^*\simeq \Oo_{k_1}^*\Oo_{k_2}^* \Oo_{k_3}^* \times \SZ/2\SZ$ (since $(\e_1\e_2\e_3)^2\in  \Oo_{k_1}^*\Oo_{k_2}^*\Oo_{k_3}^*$).

\smallskip

$\circ$ Else, for instance,  $\lambda_1=+1$.  Then, by Lemma~\ref{lem:ein},
 $$\eta\in\Oo_K^*\;\Leftrightarrow\;1=\lambda_2^{m}=\lambda_3^{m}\;\Leftrightarrow\;\eta_i\in\Oo_{k_i}^*\;(i=1,2,3)\Leftrightarrow\;\Oo_K^*= \Oo_{k_1}^*\Oo_{k_2}^* \Oo_{k_3}^*.$$
 
 \noindent $\bullet$ Finally, if $K\not\subset \SR$, by definition,  $\Oo_K^*= \Oo_{k_1}^*\Oo_{k_2}^* \Oo_{k_3}^*.$

\medskip

Let us prove now Formula~(\ref{eq:A}).
 If $K$ is real and  $\lambda_1=\lambda_2=\lambda_3=-1$, then $$(\Oo_{k_1}^\times\Oo_{k_2}^\times\Oo_{k_3}^\times : \Oo_K^*)=\frac{1}{2}(\Oo_{k_1}^\times\Oo_{k_2}^\times\Oo_{k_3}^\times : \Oo_{k_1}^*\Oo_{k_2}^* \Oo_{k_3}^*). $$
Else, $$(\Oo_{k_1}^\times\Oo_{k_2}^\times\Oo_{k_3}^\times : \Oo_K^*)=(\Oo_{k_1}^\times\Oo_{k_2}^\times\Oo_{k_3}^\times : \Oo_{k_1}^*\Oo_{k_2}^* \Oo_{k_3}^*).$$
Thus, to conclude, it suffices to prove the isomorphism
 $$\Oo_{k_1}^\times\times\Oo_{k_2}^\times \times\Oo_{k_3}^\times  \;/\; \Oo_{k_1}^*\times \Oo_{k_2}^*\times \Oo_{k_3}^*\simeq  \Oo_{k_1}^\times\Oo_{k_2}^\times \Oo_{k_3}^\times \;/\; \Oo_{k_1}^*\Oo_{k_2}^*\Oo_{k_3}^*$$
since then $(\Oo_{k_1}^\times\times\Oo_{k_2}^\times \times\Oo_{k_3}^\times  : \Oo_{k_1}^*\times \Oo_{k_2}^*\times \Oo_{k_3}^*)=\prod_{i=1}^3 (\Oo_{k_i}^\times :\Oo_{k_i}^*)$. 
 
 \smallskip
 
 Let us consider the kernel of the surjective morphism
  $$\mu:  \Oo_{k_1}^\times\times\Oo_{k_2}^\times \times\Oo_{k_3}^\times \to  \Oo_{k_1}^\times\Oo_{k_2}^\times \Oo_{k_3}^\times \;/\; \Oo_{k_1}^*\Oo_{k_2}^*\Oo_{k_3}^*.$$ 
 $$\Ker(\mu)=\{(\eta_1,\eta_2,\eta_3)\in \Oo_{k_1}^\times\times\Oo_{k_2}^\times \times\Oo_{k_3}^\times \mid \eta_1\eta_2\eta_3\in \Oo_{k_1}^*\Oo_{k_2}^*\Oo_{k_3}^*\}.$$
In fact, the first part of the proof shows that, for every $i$, $\eta_i\in\Oo_{k_i}^*$, and hence, $(\eta_1,\eta_2,\eta_3)\in \Oo_{k_1}^*\times \Oo_{k_2}^*\times \Oo_{k_3}^*$.
\end{proof}

\begin{lemma}[Kubota's Hilfss\"atze 3 and 4]\label{HS3}
Assume that $K\subset\SR$ and $\lambda_1=\lambda_2=\lambda_3=-1.$ Let $\eta=\e_1\e_2\e_3$. Then, $\SQ(\eta)=K,$ the extension $\SQ(\sqrt{\eta})/\SQ$ is Galois and the exponent of its Galois group is $2$. Moreover, there exists $r\in\SQ\setminus\{0\}$ such that $K(\sqrt{\eta})=K(\sqrt{r})$ and, if we let $\rho=\sqrt{r\eta}$, then $\eta=\frac{\rho^2}{r}$ where $\rho\in K$ and $r\in\SQ$. 
\end{lemma}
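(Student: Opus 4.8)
The engine throughout is that, since $\lambda_1=\lambda_2=\lambda_3=-1$, one has $\sigma_j(\e_i)=\lambda_i/\e_i=-1/\e_i$ whenever $j\neq i$, which after multiplying the appropriate conjugates yields the key identity $\eta\,\sigma_i(\eta)=\e_i^2$ for $i=1,2,3$. For the present claim, by the Galois correspondence for $K/\SQ$ it suffices to show that no nonidentity element of $G$ fixes $\eta$: since $\sigma_1(\eta)=\e_1/(\e_2\e_3)$, the equality $\sigma_1(\eta)=\eta$ would force $(\e_2\e_3)^2=1$, which is absurd as $\e_2,\e_3>1$, and $\sigma_2,\sigma_3$ are symmetric. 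Hence $\SQ(\eta)=K$; in particular each $\e_i$ lies in $K=\SQ(\eta)\subseteq\SQ(\sqrt\eta)$, and $\SQ(\sqrt\eta)=K(\sqrt\eta)$ has degree at most $2$ over $K$.

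\textbf{Step 2 ($\SQ(\sqrt\eta)/\SQ$ is Galois of exponent $2$).} If $P$ is the minimal polynomial of $\eta$ over $\SQ$, then $\sqrt\eta$ is a root of $P(X^2)$, whose roots are $\pm\sqrt\eta$ together with the $\pm\sqrt{\sigma_i(\eta)}$; the identity $\sigma_i(\eta)=\e_i^2/\eta$ gives $\sqrt{\sigma_i(\eta)}=\e_i/\sqrt\eta\in\SQ(\sqrt\eta)$ (using $\e_i\in\SQ(\sqrt\eta)$ and $1/\sqrt\eta=\sqrt\eta/\eta$), so $P(X^2)$ splits in $\SQ(\sqrt\eta)$ and $\SQ(\sqrt\eta)/\SQ$ is normal, hence Galois. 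Now a $\SQ$-automorphism $h$ of $\SQ(\sqrt\eta)$ is determined by $h(\sqrt\eta)$, and $h|_K$ lies in $G$ (as $K/\SQ$ is Galois). If $h$ fixes $K$ then $h(\eta)=\eta$, so $h(\sqrt\eta)=\pm\sqrt\eta$ and $h^2(\sqrt\eta)=\sqrt\eta$. If $h|_K=\sigma_i$, then $h(\sqrt\eta)^2=\sigma_i(\eta)=\e_i^2/\eta$ forces $h(\sqrt\eta)=\pm\e_i/\sqrt\eta$, whence $h^2(\sqrt\eta)=\pm h(\e_i)/h(\sqrt\eta)=\bigl(\sigma_i(\e_i)/\e_i\bigr)\sqrt\eta=\sqrt\eta$, since $\sigma_i$ fixes $k_i\ni\e_i$. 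In every case $h^2(\sqrt\eta)=\sqrt\eta$, i.e.\ $h^2=\mathrm{id}$; and since $[\SQ(\sqrt\eta):\SQ]\ge[K:\SQ]=4$ the group is nontrivial, so its exponent equals $2$.

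\textbf{Step 3 (descent to $\SQ$).} Having exponent $2$, the group $\Gal(\SQ(\sqrt\eta)/\SQ)$ is elementary abelian, so $\SQ(\sqrt\eta)$ is multiquadratic, i.e.\ generated over $\SQ$ by square roots of rationals. If $\sqrt\eta\in K$ put $r=1$. Otherwise $[\SQ(\sqrt\eta):K]=2$ and the group is $(\SZ/2\SZ)^3$, so $\SQ(\sqrt\eta)$ has seven quadratic subfields whereas $K$ has only three ($k_1,k_2,k_3$); hence there is a quadratic subfield, necessarily of the form $\SQ(\sqrt r)$ with $r\in\SQ^\times$ (and $r>0$ since $\SQ(\sqrt\eta)\subset\SR$), not contained in $K$, and then $K\subsetneq K(\sqrt r)\subseteq\SQ(\sqrt\eta)=K(\sqrt\eta)$ forces $K(\sqrt r)=K(\sqrt\eta)$. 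In all cases $K(\sqrt\eta)=K(\sqrt r)$, and by the usual description of quadratic extensions this gives $\eta/r\in(K^\times)^2$, say $\eta=rt^2$ with $t\in K$. Then $r\eta=(rt)^2$, so $\rho=\sqrt{r\eta}=|rt|\in K$ and $\rho^2/r=rt^2=\eta$, which is what was claimed.

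The only delicate point I anticipate is the exponent-$2$ computation in Step 2: one has to check that the sign ambiguity in $h(\sqrt\eta)=\pm\e_i/\sqrt\eta$ is harmless — it is, since it occurs squared in $h^2$ — and that $\sigma_i$ really fixes $\e_i$, which is precisely the defining property $\sigma_{i\vert k_i}=\mathrm{id}_{k_i}$. Steps 1 and 3 are routine once the identity $\eta\,\sigma_i(\eta)=\e_i^2$ is in hand, Step 3 amounting to standard multiquadratic field theory.
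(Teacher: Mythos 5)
Your proof is correct and follows essentially the same route as the paper: the same computation of the conjugates $\pm\e_i/\sqrt{\eta}$ establishes normality and exponent $2$, and the rational $r$ arises as (a generator of) a quadratic subfield of $\SQ(\sqrt{\eta})$ not contained in $K$. The only immaterial variation is in Step 3, where you locate $\SQ(\sqrt{r})$ by counting quadratic subfields and conclude $\eta/r\in(K^\times)^2$ via Kummer theory, whereas the paper chooses a complement $G_2$ of the subgroup fixing $K$ and observes directly that $\sqrt{r\eta}$ is fixed by the nontrivial $K$-automorphism of $K(\sqrt{\eta})$.
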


\begin{proof}
The element $\eta$ of $K$ cannot belong to a quadratic subfield since, if for instance $\eta\in k_1$, then $\e_2\e_3\in k_1$. Whence, $\e_2\e_3=\sigma_1(\e_2\e_3)=\frac{-1}{\e_2}\frac{-1}{e_3},$ then $(\e_2\e_3)^2=1,$ which is impossible since by hypothesis $\e_i>1$. Finally, $K=\SQ(\eta)$. 

Thus $K\subseteq \SQ(\sqrt{\eta})$. If $\sqrt{\eta}\in K$ then, obviously, $\Gal(\SQ(\sqrt{\eta})/\SQ)=\Gal(K/\SQ)\simeq(\SZ/2\SZ)^2$ and we have $\eta=\frac{\rho^2}{r}$ with $\rho=\sqrt{\eta}$ and $r=1$. On the other hand, assume that $\sqrt{\eta}\not\in K$. Then the polynomial $X^2-\eta=0$ is irreducible over $K$ and $[\SQ(\sqrt{\eta}):\SQ]=8.$ The conjugates of $\eta$ with respect to $\SQ$, namely, $\eta$, $\sigma_1(\eta)=\frac{\e_1^2}{\eta}$, $\sigma_2(\eta)=\frac{\e_2^2}{\eta}$, $\sigma_3(\eta)=\frac{\e_3^2}{\eta},$ are the roots of the minimal polynomial $P(X)$ of $\eta$ over $\SQ$. The minimal polynomial of $\sqrt{\eta}$ over $\SQ$ is $P(X^2)$ whose roots, $\pm\sqrt{\eta}$, $\pm\frac{\e_1}{\sqrt{\eta}}$, $\pm\frac{\e_2}{\sqrt{\eta}}$, $\pm\frac{\e_3}{\sqrt{\eta}}$, are in $\SQ(\sqrt{\eta})$. The extension $\SQ(\sqrt{\eta})/\SQ$ is Galois. Moreover, the  $\SQ$-automorphisms of $\SQ(\sqrt{\eta})$ distinct from the identity, determined by these conjugates, are all of order 2. For instance, let us consider $\tau$ determined by  $\tau(\sqrt{\eta})=\frac{\e_1}{\sqrt{\eta}}$. We have $\tau(\eta)=\frac{e_1^2}{\eta}=\sigma_1(\e_1),$ thus $\tau_{\vert K}=\sigma_1$, then $\tau^2(\sqrt{\eta})=\tau\left(\frac{\e_1}{\sqrt{\eta}}\right)=\sigma_1(\e_1)\tau(\sqrt{\eta})^{-1}=\sqrt{\eta}.$ Finally,  $\Gal(\SQ(\sqrt{\eta})/\SQ)$ whose order is 8 and whose exponent is 2 is isomorphic $\left(\SZ/2\SZ\right)^3$. 

Let $G_1$ be the subgroup of order 2 of $\Gal(\SQ(\sqrt{\eta})/\SQ)$ generated by the $K$-isomorphism of $\SQ(\sqrt{\eta})$ characterized by $\sqrt{\eta}\mapsto -\sqrt{\eta}$ and let $G_2$ be a subgroup of order 4 such that $\Gal(\SQ(\sqrt{\eta})/\SQ)= G_1\times G_2$. The field fixed by $G_1$ is $K$ while the field fixed by $G_2$ is a quadratic subfield $\SQ(\sqrt{r})$. Since $G_1$ and $G_2$ generate $\Gal(\SQ(\sqrt{\eta})/\SQ)$, $K\cap \SQ(\sqrt{r})=\SQ$ and, in particular, $\sqrt{r}\notin K$. Consequently, $K(\sqrt{r})=K(\sqrt{\eta})$ and the $K$-non-trivial automorphism of $K(\sqrt{\eta})$ send $\sqrt{\eta}$ on $-\sqrt{\eta}$ as well as $\sqrt{r}$ on $-\sqrt{r}$ while  $\rho=\sqrt{r\eta}$ is fixed, and hence, is in $K$. Finally, $\eta=\frac{\rho^2}{r}$.
\end{proof}

\begin{lemma}[Kubota's Hilfssatz 4]\label{lem:7}
Let $\eta\in\Oo_K^\times$. One can write $\eta=\frac{\rho^2}{r}$ where $\rho\in\Oo_K$ and $r\in\SQ$ if and only if $\eta\in\Oo_K^*$. In particular, $\pm(\Oo_K^\times)^{(2)}\subseteq\Oo_K^*.$
\end{lemma}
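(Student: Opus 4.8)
The statement of Lemma~\ref{lem:7} is an ``if and only if'' together with a trailing inclusion, so I would split the argument into three parts, dealing with the biquadratic field $K$ by reducing to what has already been proven for quadratic fields and for the special unit $\e_1\e_2\e_3$.

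\emph{Necessity.} Suppose $\eta=\frac{\rho^2}{r}$ with $\rho\in\Oo_K$ and $r\in\SQ\setminus\{0\}$. If $K\not\subset\SR$ there is nothing to prove since $\Oo_K^*=\Oo_{k_1}^*\Oo_{k_2}^*\Oo_{k_3}^*$ but one still has to check $\eta$ lies in this product; the cleaner route is: in all cases, for each $i$ the image of $\eta$ under the norm map $N_{K/k_i}$ equals $\frac{N_{K/k_i}(\rho)^2}{r^2}$, which is a square of an element of $k_i$ divided by a rational square, hence is totally positive as a real number (when $k_i\subset\SR$) and in particular has positive norm down to $\SQ$. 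Tracking the signs under the four embeddings of $K$ and using the definition of $\Oo_K^*$ (``totally $>0$ or totally $<0$'' in the real case) should give $\eta\in\Oo_K^*$. More efficiently: writing $\eta=\mathrm{sgn}(\eta)\,\e_1^{m_1}\e_2^{m_2}\e_3^{m_3}$ (using $q_K<\infty$ so every unit of $K$ is, up to controlled ambiguity, of this shape — but actually it is cleaner to argue abstractly), the relation $\eta r=\rho^2$ forces $N_{K/\SQ}(\eta)=$ a positive rational, and more locally each partial norm is positive; invoking Lemma~\ref{lemA} in the real case (after noting $\lambda_i^{m_i}$ is determined by the sign behaviour) yields $\eta\in\Oo_K^*$.

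\emph{Sufficiency.} Let $\eta\in\Oo_K^*$. By Kubota's Hilfssatz~2 (Lemma~\ref{HS2}), $\Oo_K^*$ is generated by $\Oo_{k_1}^*\Oo_{k_2}^*\Oo_{k_3}^*$ together with, in the exceptional real case $\lambda_1=\lambda_2=\lambda_3=-1$, the single extra element $\e_1\e_2\e_3$. So it suffices to show that each generator can be written as $\frac{\rho^2}{r}$ and that this property is multiplicative (which it visibly is: $\frac{\rho^2}{r}\cdot\frac{\rho'^2}{r'}=\frac{(\rho\rho')^2}{rr'}$). For $\eta_i\in\Oo_{k_i}^*$ we have $N_{k_i/\SQ}(\eta_i)=+1$, so Lemma~\ref{lem:ein} gives $\eta_i=\frac{\rho_i^2}{r_i}$ with $\rho_i\in\Oo_{k_i}\subset\Oo_K$ and $r_i\in\SQ$ — done. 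For the exceptional extra generator $\eta=\e_1\e_2\e_3$, this is exactly the content of Kubota's Hilfss\"atze~3 and~4 (Lemma~\ref{HS3}): there one finds $r\in\SQ\setminus\{0\}$ and $\rho=\sqrt{r\eta}\in K$ with $\eta=\frac{\rho^2}{r}$, and since $\eta$ is an algebraic integer and $r$ a rational with $\rho^2=r\eta\in\Oo_K$, one checks $\rho\in\Oo_K$ (its square is an integer and it lies in $K$, so it is integral). Multiplying representations of the generators gives the representation of $\eta$.

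\emph{The last sentence.} Finally $\pm(\Oo_K^\times)^{(2)}\subseteq\Oo_K^*$: an element of the form $\pm\rho^2$ with $\rho\in\Oo_K^\times$ is of the shape $\frac{(\rho)^2}{\pm1}$ with $\pm1\in\SQ$, hence lies in $\Oo_K^*$ by the part just proved. I expect the main obstacle to be the \emph{necessity} direction: packaging ``$\eta=\rho^2/r$'' into membership in $\Oo_K^*$ requires carefully translating the positivity coming from being a square-up-to-a-rational into the sign conditions in the definition of $\Oo_K^*$ (all-positive-or-all-negative in the real case) or, in the imaginary case, into the norm conditions $N_{k_i/\SQ}(\eta_i)=+1$ after writing $\eta$ as a product over the three quadratic subfields; the clean way is to apply $N_{K/k_i}$, observe the result is in $\pm(\Oo_{k_i}^\times)^{(2)}\cdot(\SQ^\times)^2$ and hence has norm $+1$ over $\SQ$, then feed this into Lemma~\ref{lemA} (real case) or directly into the definition (imaginary case).
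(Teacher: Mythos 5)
Your sufficiency direction is essentially the paper's own argument (decompose $\Oo_K^*$ via Lemma~\ref{HS2} into $\Oo_{k_1}^*\Oo_{k_2}^*\Oo_{k_3}^*$ plus, in the exceptional real case, the coset of $\e_1\e_2\e_3$; apply Lemma~\ref{lem:ein} to each factor; quote Lemma~\ref{HS3} for $\e_1\e_2\e_3$; use multiplicativity of the shape $\rho^2/r$), and the trailing inclusion is handled correctly. The problem is the necessity direction, where you yourself locate ``the main obstacle'' but do not actually overcome it.

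Concretely: membership in $\Oo_K^*$ is not just a sign (or norm) condition. In the real case the definition requires $\eta\in\Oo_{k_1}^\times\Oo_{k_2}^\times\Oo_{k_3}^\times$ \emph{and} that $\eta$ be totally positive or totally negative; in the imaginary case it requires $\eta\in\Oo_{k_1}^*\Oo_{k_2}^*\Oo_{k_3}^*$. Since $q_K=(\Oo_K^\times:\Oo_{k_1}^\times\Oo_{k_2}^\times\Oo_{k_3}^\times)$ can equal $2$ or $4$, there exist units of $K$ outside the product of the subfield unit groups, so ``tracking the signs under the four embeddings'' can only ever deliver the positivity half of the condition, never the membership half. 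Your ``more efficiently'' variant, which writes $\eta=\mathrm{sgn}(\eta)\,\e_1^{m_1}\e_2^{m_2}\e_3^{m_3}$ before feeding into Lemma~\ref{lemA}, is circular for the same reason: that expression is available only for units already known to lie in $\Oo_{k_1}^\times\Oo_{k_2}^\times\Oo_{k_3}^\times$. The missing idea — which is the heart of the paper's proof — is to set $\eta_i=N_{K/k_i}(\rho)/r\in\Oo_{k_i}^\times$ (you do introduce these via $N_{K/k_i}(\eta)=\eta_i^2$) and then to \emph{multiply them together}: since $N_{K/k_i}(\rho)=\rho\,\sigma_i(\rho)$, one gets
$$\eta_1\eta_2\eta_3=\frac{\rho^2}{r}\cdot\frac{N_{K/\SQ}(\rho)}{r^2}=\pm\,\eta,$$
because $N_{K/\SQ}(\rho)/r^2=N_{k_i/\SQ}(\eta_i)=\pm1$. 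This single identity is what puts $\eta$ into $\Oo_{k_1}^\times\Oo_{k_2}^\times\Oo_{k_3}^\times$; the sign/norm analysis you describe then finishes the real and imaginary cases respectively. Without it the necessity direction does not go through.
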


\begin{proof}
Assume that $\eta=\frac{\rho^2}{r}$ where $\rho\in\Oo_K$ and $r\in\SQ$. Then, $N_{K/\SQ}(\eta)=1$ since $N_{K/\SQ}(\eta)=\left(\frac{N_{K/\SQ}(\rho)}{r}\right)^2>0.$ For all $i$, let $\eta_i=\frac{N_{K/k_i}(\rho)}{r}$. As $N_{K/k_i}(\eta)=\left(\frac{N_{K/k_i}(\rho)}{r}\right)^2=\eta_i^2$, we have $\eta_i^2\in\Oo_{k_i}$, and $\eta_i\in\Oo_{k_i}$. Moreover, $N_{k_i/\SQ}(\eta_i)=\frac{N_{K/\SQ}(\rho)}{r^2}=\pm 1$ (the sign does not depend on $i$). Finally,
$$\eta_1\eta_2\eta_3=\frac{\rho \sigma_2(\rho)}{r} \frac{\rho \sigma_3(\rho)}{r} \frac{\rho \sigma_1(\rho)}{r} =\frac{ {\rho}^2}{r} \frac{N_{K/\SQ}(\rho)}{r^2}=\pm \eta.$$
Thus, $\eta\in \Oo_{k_1}^\times\Oo_{k_2}^\times\Oo_{k-3}^\times$.  If $K\not\subset\SR$, for instance $k_1\subseteq \SR$, then $\Oo_{k_2}^\times=\Oo_{k_2}^*$ and $\Oo_{k_3}^\times=\Oo_{k_3}^*$, consequently $\eta_1\in \Oo_{k_1}^*$, and finally, $\eta\in \Oo_{k_1}^*\Oo_{k_2}^*\Oo_{k_3}^*=\Oo_K^*$. If $K\subset\SR$, since $\eta=\frac{\rho^2}{r}$ with $r\in\SQ$, we see that $\eta$ is either totally positive or totally negative, and hence, $\eta\in\Oo_K^*$. 

Conversely, assume that $\eta\in\Oo_K^*$. If $\eta\in \Oo_{k_1}^*\Oo_{k_2}^*\Oo_{k_3}^*$, then $\eta=\eta_1\eta_2\eta_3$ where $\eta_i\in\Oo_{k_i}^*$ and, by Lemma~\ref{lem:ein}, each $\eta_i$ can be written as $\frac{\rho_i^2}{r_i}$ where $\rho_i\in\Oo_{k_i}^\times $ and $r_i\in\SQ$. Then, $\eta=\frac{(\rho_1\rho_2\rho_3)^2}{r_1r_2r_3}$. It remains the case where $K\subset \SR$ and the norm of the three fundamental units is equal to $-1$. It is enough to prove that $\e_1\e_2\e_3$ may be written under the wanted form. This is what said Lemma~\ref{HS3}.
\end{proof}


\section{Proof of Kubota's Satz Vier}

By construction, the next lemma is obvious 

\begin{lemma}
$\mathrm{Im}\,\psi\simeq \Ima\,\psi_0/(\Pp_K\cap  \Ima\,\psi_0)$ and $\vert \Ima\,\psi\vert=\left(\,\Ima\,\psi_0:(\Pp_K\cap  \Ima\,\psi_0)\,\right).$
\end{lemma}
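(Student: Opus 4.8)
\emph{Proof proposal.} The plan is to record that $\psi$ is, by its very construction, the map obtained from $\psi_0$ followed by the canonical surjection $\pi_K\colon\Ii_K^G\twoheadrightarrow\Ii_K^G/\Pp_K^G=\Pp o(K)$, and then to apply the isomorphism theorem to the restriction of $\pi_K$ to the subgroup $\Ima\,\psi_0$ of $\Ii_K^G$. First I would check that $\psi_0$ carries $\Pp_{k_1}^G\times\Pp_{k_2}^G\times\Pp_{k_3}^G$ into $\Pp_K^G$: if $\aa_i=\alpha_i\Oo_{k_i}$ with $\alpha_i\in\Oo_{k_i}$, then $\aa_1\aa_2\aa_3\Oo_K=\alpha_1\alpha_2\alpha_3\Oo_K$ is principal and $G$-stable. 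Hence $\pi_K\circ\psi_0$ factors through $\Pp o(k_1)\times\Pp o(k_2)\times\Pp o(k_3)$, the induced map is exactly $\psi$, and therefore $\Ima\,\psi=\pi_K(\Ima\,\psi_0)$.

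Next, the isomorphism theorem applied to $\pi_K$ restricted to $\Ima\,\psi_0\subseteq\Ii_K^G$ gives $\Ima\,\psi=\pi_K(\Ima\,\psi_0)\simeq\Ima\,\psi_0/(\Ima\,\psi_0\cap\Ker\pi_K)=\Ima\,\psi_0/(\Ima\,\psi_0\cap\Pp_K^G)$. Since $\Ima\,\psi_0\subseteq\Ii_K^G$, the intersection simplifies: $\Ima\,\psi_0\cap\Pp_K^G=\Ima\,\psi_0\cap\Pp_K\cap\Ii_K^G=\Ima\,\psi_0\cap\Pp_K$. This yields the stated isomorphism $\Ima\,\psi\simeq\Ima\,\psi_0/(\Pp_K\cap\Ima\,\psi_0)$, and taking cardinalities gives $\vert\Ima\,\psi\vert=(\Ima\,\psi_0:\Pp_K\cap\Ima\,\psi_0)$.

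As the authors indicate, there is essentially no obstacle: the content is entirely the second isomorphism theorem together with the definitions of $\psi$ and of the P\'olya groups. The only point deserving a line of care is the remark that the intersection with $\Pp_K$ may be taken inside $\Ii_K^G$ without change, which is precisely what makes the two displayed formulas equivalent to the abstract statement about $\pi_K|_{\Ima\,\psi_0}$.
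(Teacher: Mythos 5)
Your proposal is correct and is precisely the argument the paper has in mind: the paper simply declares the lemma ``obvious by construction,'' and what you have written out --- that $\psi$ is $\pi_K\circ\psi_0$ factored through the P\'olya groups, followed by the isomorphism theorem applied to $\pi_K|_{\Ima\,\psi_0}$ with the observation that $\Ima\,\psi_0\cap\Pp_K^G=\Ima\,\psi_0\cap\Pp_K$ --- is exactly the construction being referred to. No discrepancy with the paper's approach.
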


From $\Pp o(k_1)\times\Pp o(k_2)\times\Pp o(k_3)/\Ker\,\psi \simeq \textrm{Im}\,\psi\simeq\textrm{Im}\,\psi_0\,/\,(\Pp_K\cap  \Ima\,\psi_0),$
we deduce
\be\label{eq:5bis} \vert\Ker\,\psi\vert =\frac{\prod_{i=1}^3\vert\Pp o(k_i)\vert}{(\textrm{Im}\,\psi_0:\Pp_K\cap\Ima\,\psi_0)}\ee

To compute the index $(\textrm{Im}\,\psi_0:\Pp_K\cap\Ima\,\psi_0)$ in the denominator of Formula~(\ref{eq:5bis}), we consider a sequence of subgroups of  $\Ii_K^G$ contained in each others:
$$H_0=j_{\SQ}^K(\Pp_{\SQ})\subseteq H_1= \{\,\rho_0\Oo_K\mid \rho_0^2\in\SQ\,\}\subseteq H_2= \Pp_K\cap \Ima\;\psi_0\subseteq H_3=\Ima\,\psi_0\subseteq \Ii_K^G$$
 and we compute the indices  of these groups between themselves. For instance, to compute $(\textrm{Im}\,\psi_0:\Pp_K\cap\Ima\,\psi_0)=(H_3:H_2)$ we introduce $H_0=j_{\SQ}^K(\Pp_{\SQ}),$ so that

\centerline{$(H_3:H_2)=\frac{(H_3:H_0)}{(H_2:H_0)}.$}

\begin{lemma}\label{lem.321}Denoting by $s_K$ the number of primes which ramify in $K$, we have
\be \label{eq:5ter} (H_3:H_0)=\left(\,\mathrm{Im}\,\psi_0 :j_{\SQ}^K(\Pp_{\SQ})\,\right)=2^{s_K}.\ee
\end{lemma}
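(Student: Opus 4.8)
The plan is to identify $\mathrm{Im}\,\psi_0$ and $j_{\SQ}^K(\Pp_{\SQ})$ concretely as subgroups of $\Ii_K^G$ and to count the quotient directly. Recall that $\Ii_K^G = \langle\,\Pi_q(K)\mid q\in\SN^*\,\rangle$, and that $\Pi_q(K)$ is principal whenever $q$ is not a power of a ramified prime, while for a ramified prime $p$ with $e_p(K/\SQ)=e$ one has $p\Oo_K = \PP^e$ where $\PP = \Pi_{p^{f}}(K)$ is the product of primes above $p$ of residue degree $f=4/e$ (in the biquadratic setting $e\in\{2,4\}$). So modulo principal ideals of $\Oo_K$, the group $\Ii_K^G$ is generated by the classes of the $\Pi$'s attached to ramified primes, and $j_{\SQ}^K(\Pp_{\SQ})$ is generated by the ideals $p\Oo_K = \Pi_{p^{4/e_p}}(K)^{e_p}$.

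First I would describe $\mathrm{Im}\,\psi_0$ explicitly. Since $\psi_0(\aa_1,\aa_2,\aa_3) = \aa_1\aa_2\aa_3\Oo_K$ with each $\aa_i\in\Ii_{k_i}^G$, and $\Ii_{k_i}^G$ is generated by the $\Pi_q(k_i)$, the image $\mathrm{Im}\,\psi_0$ is the subgroup of $\Ii_K^G$ generated by all the $\Pi_q(k_i)\Oo_K$. For a prime $p$ ramified in $K$, say ramified in exactly the two quadratic subfields $k_i,k_j$ (the generic case $i_2=0$), we have $p\Oo_{k_i} = \qq_i^2$ and $\qq_i\Oo_K$ equals (up to principal ideals, and after matching residue degrees) the $\PP$ above $p$; for $p=2$ when $i_2=1$ all three $\qq_i\Oo_K$ give $\Pi_2(K)$. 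Meanwhile $j_{\SQ}^K(\Pp_{\SQ})$ is generated by the $p\Oo_K$, which are the $e_p$-th powers of these same $\PP$'s. So the key point is that $\mathrm{Im}\,\psi_0$, modulo $j_{\SQ}^K(\Pp_{\SQ})$ together with $\Pp_K$, is generated over each ramified prime $p$ by $\overline{\PP}$ subject only to $\overline{\PP}^{\,e_p}$ being trivial — but actually one needs the sharper statement that $(H_3:H_0)$, the index of the subgroup generated by the $p\Oo_K$ inside the subgroup generated by the $\qq_i\Oo_K$ (both viewed inside $\Ii_K^G$, \emph{not} the class group), equals $\prod_p e_p$ divided by nothing, i.e. $2^{s_K}$.

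The cleanest route: show $H_3/H_0$ is generated by the images of the $\Pi_p$-type ideals attached to ramified primes, that these generators are independent (the ramified primes are distinct, so the corresponding ideals have disjoint support), and that each contributes a cyclic factor of order exactly $e_p/(\text{something})$. Concretely, $H_3 = \mathrm{Im}\,\psi_0$ is generated over a ramified prime $p$ by $\qq_i\Oo_K$ for the subfields $k_i$ in which $p$ ramifies; when $i_2=0$ the primes $p$ contribute $e_p=2$ and the two subfields give the \emph{same} ideal $\PP\Oo_K$ up to an element of $H_0$ (since $\qq_i\qq_j\Oo_K \sim p\Oo_K$), so the local contribution to $H_3/H_0$ is cyclic of order $2$; for $p=2$ with $i_2=1$, $e_2=4$ and the analysis gives a cyclic contribution of order $4$. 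Since $\sum_{p}(\text{local exponent of }2) = \#\{p: e_p=2\} + 2\cdot[i_2=1]$, and $\prod_p e_p = 2^{\#\{p:e_p=2\}+2[i_2=1]}$, while $s_K$ counts all ramified primes, I would need the identity $\#\{p:e_p=2\} + 2[i_2=1] = s_K$ when one sets it up correctly — which follows because $s_K$ counts \emph{all} ramified $p$, each $e_p=2$ prime contributing $1$ to the exponent and the single $e_2=4$ prime (when it exists) contributing $2$, matching $\log_2\prod_p e_p$; hence $(H_3:H_0)=2^{s_K}$.

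The main obstacle I anticipate is the bookkeeping at $p=2$ in the totally ramified case $i_2=1$: there $2\Oo_K = \PP^4$, the three subfield ideals $\qq_i\Oo_K$ all map to $\PP$ up to $H_0$, and one must verify the local quotient is cyclic of order $4$ rather than $2$ — i.e. that $\PP^2\Oo_K$ is genuinely not in $H_0$ modulo the other generators — which requires knowing precisely how $2$ decomposes in each $k_i$ and in $K$, and that no principal ideal of $\SQ$ other than powers of $2$ interferes. Handling this, together with making the "disjoint support of distinct ramified primes" argument rigorous so that the local contributions genuinely multiply, is where the real work lies; everything else is the formal computation $(H_3:H_0)=\prod_p(\text{local index}) = \prod_p e_p$ rewritten as $2^{s_K}$ using $s_1+s_2+s_3 = 2s_K + i_2$.
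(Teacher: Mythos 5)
Your overall strategy (localize $H_3/H_0$ at each ramified prime and multiply the local contributions) is essentially the paper's, but there is a concrete error in your treatment of the totally ramified prime $2$, and the identity you invoke to recover $2^{s_K}$ at the end is false. When $e_2(K/\SQ)=4$, the extension of $\qq_i=\Pi_2(k_i)$ to $\Oo_K$ is \emph{not} the prime $\PP=\Pi_2(K)$ above $2$: since $2\Oo_{k_i}=\qq_i^2$ and $2\Oo_K=\PP^4$, one has $\qq_i\Oo_K=\PP^2$ for each $i$. Hence the $2$-part of $\mathrm{Im}\,\psi_0$ is $\langle \PP^2\rangle$, not $\langle\PP\rangle$, and the local quotient modulo $\langle 2\Oo_K\rangle=\langle\PP^4\rangle$ is cyclic of order $2$, not $4$ as you claim. (This is consistent with Formula~(\ref{eq:II}): $\overline{\Pi_2(K)}$ generates the cokernel of $\psi$ precisely because $\PP$ itself is not reached by $\psi_0$.) Every ramified prime therefore contributes a factor of exactly $2$, which is how the paper obtains $(H_3:H_0)=2^{s_K}$: the only relation needed is $\left(\Pi_p(k_{i(p)})\right)^2\Oo_K=p\Oo_K$, valid for all ramified $p$ including the totally ramified $2$.

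Your identification $(H_3:H_0)=\prod_p e_p$ fails exactly when $i_2=1$: one has $\prod_p e_p=2^{s_K+i_2}$, since $s_K$ counts each ramified prime once while the totally ramified $2$ contributes $2$ to $\log_2\prod_p e_p$; the identity $\#\{p:e_p=2\}+2[i_2=1]=s_K$ that you say you "would need" reads $s_K-i_2+2i_2=s_K$, i.e.\ forces $i_2=0$. Had you carried your local computation through consistently, you would have obtained $2^{s_K+i_2}$, contradicting the lemma, and the factor $2^{i_2}$ would then be double-counted in Satz 4, where it already enters through $s_1+s_2+s_3=2s_K+i_2$ in the numerator $\prod_i\vert\Pp o(k_i)\vert$. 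The repair is to replace your order-$4$ local factor at $2$ by the correct order-$2$ factor; the rest of your argument (independence of the local contributions via disjoint supports of actual ideals, not classes) is sound and matches the paper.
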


\begin{proof}
An ambiguous ideal of $k_i$ is of the form $\prod_{p\in\SP}\left(\Pi_{p^{f_{p,i}}}(k_i)\right)^{m_{p,i}}$. Since the $\Pi_p(k_i)$'s corresponding to primes $p$ that are inert or split are principal generated by a rational, we can forget them in the quotient by $j_{\SQ}^K(\Pp_{\SQ})$ and consider only $\prod_{p\vert d_K}\left(\Pi_{p}(k_i)\right)^{m_{p,i}}$. Note also that, if $p$ is ramified in $k_i$, then it is also ramified in at least another  $k_j$ where $i\not=j$, but since $\Pi_p(k_i)\Oo_K=\Pi_p(k_j)\Oo_K$, we may consider only one of these $k_i$, that we denote by $k_{i(p)}$. Finally, we are led to consider products of the form $\prod_{p\vert d_K}\left(\Pi_p(k_{i(p)})\right)^{m_{p}}$. Moreover, as $\left(\Pi_p(k_{i(p)})\right)^2\Oo_K=p\Oo_K$, we may assume that the exponents are 0 or 1, which shows that the number of distinct products is equal to $2^{s_K}$.
\end{proof}

\begin{lemma}\label{lem:5}
$H_2=\Pp_K\cap\mathrm{Im}\,\psi_0=\{\,\rho\Oo_K \mid \rho\in K \textrm{ such that } \rho^2\Oo_K \in j_{\SQ}^K(\Pp_{\SQ})\,\}.$\end{lemma}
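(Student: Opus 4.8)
The plan is to prove the two inclusions separately, the inclusion ``$\subseteq$'' being the one requiring an argument. Recall that $H_2 = \Pp_K \cap \Ima\,\psi_0$ consists of the principal fractional ideals $\xi\Oo_K$ of $K$ that can be written as $\aa_1\aa_2\aa_3\Oo_K$ with $\aa_i \in \Ii_{k_i}^G$; the claim is that such ideals are exactly those of the form $\rho\Oo_K$ with $\rho \in K$ and $\rho^2\Oo_K \in j_{\SQ}^K(\Pp_{\SQ})$, i.e. $\rho^2\Oo_K = r\Oo_K$ for some $r \in \SQ^\times$.

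For the inclusion ``$\supseteq$'', suppose $\rho \in K$ with $\rho^2\Oo_K = r\Oo_K$, $r \in \SQ^\times$. Then $\rho\Oo_K$ is visibly principal, so it lies in $\Pp_K$; I must check it lies in $\Ima\,\psi_0$. The idea is that $\rho^2\Oo_K = r\Oo_K$ forces the ideal $\rho\Oo_K$ to be ``locally rational up to squares'': writing $\rho\Oo_K = \prod_{\PP} \PP^{n_\PP}$, the equality $2 n_\PP = e_p(K/\SQ)\, v_p(r)$ (where $\PP \mid p$) shows that above each rational prime $p$ the exponents $n_\PP$ are equal at all $\PP \mid p$ when $p$ is unramified in $K$, and in general are governed only by $p$ itself; hence $\rho\Oo_K$ is a product of the $G$-stable ideals $\Pi_p(K)$ and of ideals extended from $\SQ$, so $\rho\Oo_K \in \Ii_K^G$, and in fact $\rho\Oo_K = \aa_1\aa_2\aa_3\Oo_K$ with each $\aa_i$ built from the $\Pi_p(k_i)$'s as in the proof of Lemma~\ref{lem.321}, giving membership in $\Ima\,\psi_0$.

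For the harder inclusion ``$\subseteq$'', take $\xi\Oo_K \in \Pp_K \cap \Ima\,\psi_0$, say $\xi\Oo_K = \aa_1\aa_2\aa_3\Oo_K$ with $\aa_i$ ambiguous in $k_i$. As in Lemma~\ref{lem.321}, modulo $j_{\SQ}^K(\Pp_{\SQ})$ we may assume each $\aa_i$ is a product of the ramified $\Pi_p(k_i)$ with exponents $0$ or $1$, and using $\Pi_p(k_i)\Oo_K = \Pi_p(k_j)\Oo_K$ and $\Pi_p(k_i)^2\Oo_K = p\Oo_K$ we can arrange $\xi\Oo_K \equiv \prod_{p} \Pi_p(k_{i(p)})^{m_p}\Oo_K$ with $m_p \in \{0,1\}$, modulo a rational principal ideal. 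Squaring, $\xi^2\Oo_K$ becomes congruent modulo $j_{\SQ}^K(\Pp_{\SQ})$ to $\prod_p p^{m_p}\Oo_K$, which is itself in $j_{\SQ}^K(\Pp_{\SQ})$; hence $\xi^2\Oo_K \in j_{\SQ}^K(\Pp_{\SQ})$. Setting $\rho := \xi$ (up to the rational principal factor that was peeled off, which can be absorbed into $\rho$) gives $\rho\Oo_K = \xi\Oo_K$ with $\rho^2\Oo_K \in j_{\SQ}^K(\Pp_{\SQ})$, as required.

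I expect the main obstacle to be bookkeeping rather than conceptual: keeping careful track of the rational principal factors that are discarded and reinstated when passing to and from the quotient by $j_{\SQ}^K(\Pp_{\SQ})$, and making sure that the representative $\rho$ one ends up with genuinely satisfies $\rho^2\Oo_K = r\Oo_K$ with $r \in \SQ^\times$ rather than merely $\rho^2\Oo_K$ lying in the subgroup generated by extended rational ideals and squares. The clean way to organize this is to observe that $H_2/H_1$ injects into $\Ii_K^G/j_{\SQ}^K(\Pp_{\SQ})$ via $\rho\Oo_K \mapsto \rho^2\Oo_K$ — but for the statement as phrased it suffices to carry out the direct argument above.
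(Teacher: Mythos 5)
Your two inclusions are the right ones, but you have located the difficulty on the wrong side, and the single genuinely delicate point is exactly where your argument goes non-committal. The inclusion you call hard, $\Pp_K\cap\Ima\,\psi_0\subseteq\{\rho\Oo_K\mid\rho^2\Oo_K\in j_{\SQ}^K(\Pp_{\SQ})\}$, is immediate: an ambiguous ideal $\aa_i$ of the quadratic field $k_i$ satisfies $\aa_i^2=\aa_i\sigma(\aa_i)=N_{k_i/\SQ}(\aa_i)\Oo_{k_i}$, so if $\xi\Oo_K=\aa_1\aa_2\aa_3\Oo_K$ is principal then its square is generated by a rational; no reduction modulo $j_{\SQ}^K(\Pp_{\SQ})$ and no bookkeeping of discarded rational factors is needed (this is the paper's two-line direction).

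The real content is in the inclusion you call routine. From $\rho^2\Oo_K=r\Oo_K$ you correctly deduce that $\rho\Oo_K$ is $G$-stable, i.e.\ $\rho\Oo_K\in\Ii_K^G$; but $\Ii_K^G$ is strictly larger than $\Ima\,\psi_0\cdot j_{\SQ}^K(\Pp_{\SQ})$ precisely when $e_2(K/\SQ)=4$: in that case $\Pi_2(K)$ is $G$-stable, yet only its square $\Pi_2(K)^2=\Pi_2(k_{i(2)})\Oo_K$ is extended from a quadratic subfield --- this is the whole point of the cokernel formula~(\ref{eq:II}). So your phrase ``a product of the $G$-stable ideals $\Pi_p(K)$ \dots\ and in fact $\rho\Oo_K=\aa_1\aa_2\aa_3\Oo_K$'' asserts without proof exactly the step that needs one: you must rule out an odd exponent of $\Pi_2(K)$ in $\rho\Oo_K$. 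The paper does this by contradiction (an odd exponent would, after squaring twice, force $2=q^2$ with $q\in\SQ$). Your own identity $2n_\PP=e_p(K/\SQ)\,v_p(r)$ would in fact close the gap more cleanly than the paper does, since at the totally ramified prime it reads $n_\PP=2v_2(r)$, so the exponent of $\Pi_2(K)$ is automatically even --- but you never draw this conclusion, and without it the proof is incomplete at its only non-trivial point.
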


\begin{proof}
The exponents of the groups $\Pp o(k_i)$ is 2 thus, for every ambiguous ideal $\aa_i$ of $k_i$, $\aa_i^2$ is principal generated by a rational. Consequently, if an ideal $\aa_1\aa_2\aa_3\Oo_K$ in $\textrm{Im}\,\psi_0$ is principal generated by $\rho\in K$, then its square is generated by a rational. Thus, $\Pp_K\cap\Ima\,\psi_0$ is contained in the right hand side of the equality to prove.

Conversely, let $\rho\in K$ be such that $\rho^2\Oo_K\in\Ima\,\psi_0$. Since $\rho^2\Oo_K$ is ambiguous, $\rho\Oo_K$ also. Assume first that $e_2(K/\SQ)\not=4$. For every prime $p$ that is ramified in $K$, let $k_{i(p)}$ be one of the two quadratic subfields where $p$ is ramified. Then,  $\Pi_{p^{f_p}}(K)=\Pi_p(k_{i(p)})\Oo_K$ since $p$ is not ramified in the extension $K/k_{i(p)}.$ Thus, 
$$\rho\Oo_K=\prod_{p\vert d_K}(\Pi_{p^{f_p}}(K))^{m_p}=\prod_{p\vert d_K}(\Pi_p(k_{i(p)})^{m_p}\Oo_K\in\Ima\,\psi_0.$$ 
Assume  now that $e_2(K/\SQ)=4$. Then, $\Pi_2(k_i)\Oo_K=\Pi_2(K)^2$
 and letting $m_2=2m+z$ where $z=0$ or 1, we may write
$$\rho\Oo_K=\prod_{p\vert d_K}(\Pi_{p^{f_p}}(K))^{m_p}=\Pi_2(K)^{2m+z}\prod_{p\vert d_K,\, p\not=2}(\Pi_{p^{f_p}}(K))^{m_p}$$
$$=\Pi_2(K)^z\times \Pi_2(k_{i(2)})^{m}\Oo_K\prod_{p\vert d_K, \,p\not=2}(\Pi_p(k_{i(p)})^{m_p}\Oo_K$$ 
If $z=0$, as in the previous case, $\rho\Oo_K\in \Ima\,\psi_0$. Let us prove by contradiction that $z=1$ cannot hold. By squaring both sides, we would get: $r\Oo_K=r' \times \left(\Pi_2(K)\right)^2$ where $r, r'\in\SQ$. Squaring again both sides, we obtain: $\left(\frac{r}{r'}\right)^2\Oo_K=\left(\Pi_2(K)\right)^4=2\Oo_K,$ which means that there exists $\nu\in\Oo_K^\times$ such that  $2\nu=q^2$ where $q\in\SQ$. But, $\nu \in\Oo_K^\times\cap\SQ=\{\pm 1\}$ implies $2=q^2$, which is impossible. Therefore we have the inverse inclusion, and hence the announced equality.
\end{proof}

\begin{lemma}[Kubota's Hilfssatz 6]
The map $$\pi:\rho\Oo_K\in\Pp_K\cap\mathrm{Im}\,\psi_1\mapsto \rho^2/r\pmod{\pm (\Oo_K^\times)^{(2)}}\in\Oo_K^\times/\pm (\Oo_K^\times)^{(2)}$$ where $r\in\SQ$ and $\rho^2/r\in\Oo_K^\times$ is a morphism whose image is $\Oo_K^*\,/\pm (\Oo_K^\times)^{(2)}$ and whose kernel is $\{\,\rho_0\Oo_K\mid \rho_0^2\in\SQ\,\}$. Thus,
$$\Pp_K\cap\mathrm{Im}\,\psi_0 \, /\, \{\,\rho_0\Oo_K\mid \rho_0^2\in\SQ\,\}\simeq \Oo_K^*\,/\pm (\Oo_K^\times)^{(2)},$$
and hence,
\be \label{eq:6t}(H_2:H_1)=\left(\Pp_K\cap\mathrm{Im}\,\psi_0 \, :\, \{\,\rho_0\Oo_K\mid \rho_0^2\in\SQ\,\}\right) = \left(\Oo_K^*\,:\pm (\Oo_K^\times)^{(2)}\right).\ee
\end{lemma}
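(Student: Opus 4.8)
The statement bundles three claims: (i) $\pi$ is well-defined and a group homomorphism; (ii) its image is exactly $\Oo_K^*/\!\pm(\Oo_K^\times)^{(2)}$; (iii) its kernel is $\{\rho_0\Oo_K\mid\rho_0^2\in\SQ\}$. The identity $(H_2:H_1)=(\Oo_K^*:\pm(\Oo_K^\times)^{(2)})$ then follows formally from the first isomorphism theorem, using Lemma~\ref{lem:7} to see that $\pm(\Oo_K^\times)^{(2)}\subseteq\Oo_K^*$ so that the right-hand side makes sense. The plan is to treat (i), (iii), (ii) in that order.

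For well-definedness of $\pi$: given $\rho\Oo_K\in\Pp_K\cap\Ima\,\psi_0=H_2$, Lemma~\ref{lem:5} gives $\rho^2\Oo_K=r\Oo_K$ for some $r\in\SQ^\times$, so $\rho^2/r$ is a unit of $\Oo_K$; I must check this unit is well-defined modulo $\pm(\Oo_K^\times)^{(2)}$ under the two ambiguities in its definition — the choice of generator $\rho$ (it is determined up to a unit $u\in\Oo_K^\times$, which changes $\rho^2/r$ by $u^2$) and the choice of $r$ (determined up to a rational square unit, i.e.\ an element of $\SQ^\times$ that is a unit times a square, forcing the ratio to lie in $\{\pm1\}\cdot(\SQ^\times)^2$, hence to change $\rho^2/r$ by $\pm$ a rational square). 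Both ambiguities land inside $\pm(\Oo_K^\times)^{(2)}$, so $\pi$ is well-defined. Multiplicativity is then immediate: if $\rho\Oo_K,\rho'\Oo_K\in H_2$ with associated rationals $r,r'$, then $\rho\rho'\Oo_K$ has associated rational $rr'$ and $(\rho\rho')^2/(rr')=(\rho^2/r)(\rho'^2/r')$. The kernel computation (iii) is a direct unwinding: $\rho\Oo_K\in\Ker\pi$ means $\rho^2/r=\pm\mu^2$ for some $\mu\in\Oo_K^\times$, i.e.\ $\rho^2=\pm r\mu^2$, so $\rho_0:=\rho/\mu$ satisfies $\rho_0^2=\pm r\in\SQ$; conversely if $\rho_0^2\in\SQ$ then $\pi(\rho_0\Oo_K)$ is the class of $\pm 1$, trivial. (Here one uses that $\rho_0\Oo_K$ does lie in $H_2$, which is exactly the description $H_1\subseteq H_2$ from the chain of subgroups.)

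The substantive point is (ii), the image. The inclusion $\Ima\,\pi\subseteq\Oo_K^*/\!\pm(\Oo_K^\times)^{(2)}$ is the forward direction of Lemma~\ref{lem:7}: $\rho^2/r$ with $\rho\in\Oo_K$, $r\in\SQ$ is precisely the shape of elements of $\Oo_K^*$. For the reverse inclusion I would take $\eta\in\Oo_K^*$ and, by the converse direction of Lemma~\ref{lem:7}, write $\eta=\rho^2/r$ with $\rho\in\Oo_K$, $r\in\SQ^\times$; then $\rho^2\Oo_K=r\eta^{-1}$ times\ldots{} — more carefully, $\rho^2\Oo_K=(r\eta)\Oo_K$ and $r\eta$ is a unit times $r$, so $\rho^2\Oo_K=r\Oo_K\in j_\SQ^K(\Pp_\SQ)$, whence by Lemma~\ref{lem:5} we get $\rho\Oo_K\in H_2$ and $\pi(\rho\Oo_K)$ is the class of $\eta$. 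So $\pi$ surjects onto $\Oo_K^*/\!\pm(\Oo_K^\times)^{(2)}$.

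I expect the main obstacle to be bookkeeping the two independent ambiguities in the definition of $\pi$ and confirming both are absorbed by the quotient $\pm(\Oo_K^\times)^{(2)}$ — in particular the $\pm$ sign genuinely appears (from the rational-square ambiguity in $r$, and from the sign floating around in Lemma~\ref{lem:7}), so one really needs the quotient to include $-1$, not just squares; this is why the target is $\Oo_K^\times/\!\pm(\Oo_K^\times)^{(2)}$ and why Lemma~\ref{lem:7}'s assertion $\pm(\Oo_K^\times)^{(2)}\subseteq\Oo_K^*$ is invoked. Everything else is a formal consequence of Lemmas~\ref{lem:5} and~\ref{lem:7} together with the first isomorphism theorem.
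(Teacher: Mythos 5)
Your proposal is correct and follows essentially the same route as the paper: well-definedness and the kernel computation via Lemma~\ref{lem:5}, and the identification of the image via Lemma~\ref{lem:7}, with the index formula then following from the first isomorphism theorem. You merely spell out more explicitly than the paper the two ambiguities (generator up to a unit, $r$ up to sign) absorbed by $\pm(\Oo_K^\times)^{(2)}$ and the surjectivity onto $\Oo_K^*/\pm(\Oo_K^\times)^{(2)}$.
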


\begin{proof}
$\pi$ is well defined since, by Lemma~\ref{lem:5}, for every $\rho\Oo_K\in \Pp_K\cap\mathrm{Im}\,\psi_0,$ there exists $r\in\SQ$ such that $r\Oo_K=\rho^2\Oo_K$, and hence, such that $\rho^2/r\in\Oo_K^\times$. That $\pi$ is a morphism of groups is straightforward. That the image of $\pi$ is exactly $\Oo_K^*\,/\pm (\Oo_K^\times)^{(2)}$ follows from Lemma~\ref{lem:7}. Finally, assume that $\rho\Oo_K\in\Ker\,\pi,$ then $\rho^2/r=\pm \eta^2$ where $r\in\SQ$ and $\eta\in \Oo_K^\times$ and $\rho_0=\rho/\eta$ satisfies $\rho_0^2\in\SQ$ and $\rho\Oo_K=\rho_0\Oo_K$. 
\end{proof}

\begin{lemma}
\be\label{eq:7v} (H_1:H_0)=\left(\,\{\,\rho_0\Oo_K\mid \rho_0^2\in\SQ\,\} \, :\, \,j_{\SQ}^K(\Pp_{\SQ})\,\right) = \left\{\begin{array}{l} 4 \textrm{ if } \sqrt{-1}\notin K\\ 2 \textrm{ if } \sqrt{-1}\in K.\end{array}\right.\ee 	
\end{lemma}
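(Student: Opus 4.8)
The plan is to analyze the group $\{\rho_0\Oo_K \mid \rho_0^2\in\SQ\}$ directly. An element $\rho_0\in K$ with $\rho_0^2\in\SQ$ generates, together with $\SQ$, an extension of degree at most $2$; so $\rho_0 \in \SQ$ or $\rho_0 = q\sqrt{d}$ for some $q\in\SQ^\times$ and some squarefree integer $d$ with $\sqrt d \in K$. The squarefree integers $d$ (including $d=1$) with $\sqrt d \in K$ are exactly $1, d_1, d_2, d_3$ (the three quadratic subfields), so up to rational square factors there are exactly four possible values of $\rho_0^2\Oo_K$, or two when one of the $d_i$ equals $-1$, i.e.\ when $\sqrt{-1}\in K$ — because then the fractional ideal $\sqrt{-1}\,\Oo_K = \Oo_K$ is already in $j_\SQ^K(\Pp_\SQ)$, collapsing that coset.

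Concretely, first I would define the surjective map sending $\rho_0\Oo_K \in H_1$ to the class of $d \pmod{(\SQ^\times)^2}$ where $\rho_0^2 = d\cdot(\text{rational square})$; this is a well-defined homomorphism from $H_1$ into the subgroup $V$ of $\SQ^\times/(\SQ^\times)^2$ generated by the classes of $d_1,d_2,d_3$. Its kernel is precisely $j_\SQ^K(\Pp_\SQ)$: if $\rho_0^2$ is a rational square then $\rho_0 = q\cdot(\text{unit of }\Oo_K)$... more carefully, $\rho_0^2\in(\SQ^\times)^2$ forces $\rho_0\in\SQ$ (as $\rho_0$ and its negative rational square root differ by $\pm1$), hence $\rho_0\Oo_K \in j_\SQ^K(\Pp_\SQ)$; conversely every element of $j_\SQ^K(\Pp_\SQ)$ is visibly of this form. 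Surjectivity is clear since $\sqrt{d_i}\Oo_K \in H_1$ maps to the class of $d_i$. Therefore $(H_1:H_0) = |V|$.

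It remains to compute $|V|$, the order of the subgroup of $\SQ^\times/(\SQ^\times)^2$ generated by $d_1, d_2, d_3$. Since $K$ is biquadratic bicyclic with exactly three quadratic subfields $\SQ(\sqrt{d_i})$, the classes $d_1, d_2, d_3$ are pairwise distinct and nontrivial, and satisfy one relation $d_1 d_2 d_3 \in (\SQ^\times)^2$ (as $\SQ(\sqrt{d_1},\sqrt{d_2}) = K$ contains $\sqrt{d_1 d_2} = \pm\sqrt{d_3}\cdot(\text{rational})$). Hence $V \cong (\SZ/2\SZ)^2$ always, so $|V| = 4$ — unless one of the $d_i$, say $d_3$, is a negative rational square up to sign, i.e.\ $d_3 \equiv -1$, which happens exactly when $\sqrt{-1}\in K$. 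In that case the class of $d_3$ is the class of $-1$, which is still nontrivial in $\SQ^\times/(\SQ^\times)^2$, so $|V|$ is still $4$...

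Wait — the point must rather be that when $\sqrt{-1}\in K$, the ideal $\sqrt{-1}\,\Oo_K$ equals $\Oo_K \in j_\SQ^K(\Pp_\SQ)$ even though $-1\notin(\SQ^\times)^2$, so the kernel of the map to $V$ is strictly larger than $H_0$ only in the generic case; precisely, $\Ker$ of $\rho_0\Oo_K\mapsto (\text{class of }d)$ is $\{\rho_0\Oo_K : \rho_0^2\in\SQ, \ \rho_0\Oo_K = q\Oo_K \text{ some } q\in\SQ^\times\}$, and $\sqrt{-1}\Oo_K$ lands here. So I would instead directly compute $(H_1:H_0)$ as the number of distinct fractional ideals $\rho_0\Oo_K$ with $\rho_0^2\in\SQ$, modulo $j_\SQ^K(\Pp_\SQ)$: the candidates are $\Oo_K$ and $\sqrt{d_i}\,\Oo_K$ for $i=1,2,3$, these are principal fractional ideals, and two of them coincide modulo $j_\SQ^K(\Pp_\SQ)$ iff the quotient $\sqrt{d_i/d_j}$ is a rational times a unit of $\Oo_K$ — equivalently iff $d_i/d_j \in (\SQ^\times)^2$ or $\sqrt{d_i/d_j}\in\Oo_K^\times$; the first is impossible for $i\ne j$, and the second forces $d_i/d_j = $ a unit square times rational, which happens only when $\sqrt{-1}\in K$ (then $\sqrt{d_i} = \sqrt{-1}\cdot\sqrt{-d_i}$ and one pair collapses). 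The main obstacle is pinning down exactly this collapse, i.e.\ showing that $\sqrt{d_i}\Oo_K$ and $\sqrt{d_j}\Oo_K$ agree modulo $j_\SQ^K(\Pp_\SQ)$ for some $i\ne j$ \emph{if and only if} $\sqrt{-1}\in K$, which amounts to the fact that the only units of $\Oo_K$ whose square is rational are $\pm1$ and (when present) $\pm\sqrt{-1}$; this uses that $\Oo_K^\times \cap \SQ = \{\pm1\}$ together with an argument that a unit $u$ with $u^2\in\SQ$ has $u^2 = \pm1$, forcing $u\in\{\pm1,\pm\sqrt{-1}\}$.
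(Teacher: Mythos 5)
Your final argument (after you correctly abandoned the map to $\SQ^\times/(\SQ^\times)^2$, which indeed fails to be well defined on ideal classes when $\sqrt{-1}\in K$) is essentially the paper's proof: reduce to the four candidate classes $(1),(\sqrt{d_1}),(\sqrt{d_2}),(\sqrt{d_3})$ and decide coincidences via the fact that a unit $\eta$ with $\eta^2\in\SQ$ satisfies $\eta^2\in\Oo_K^\times\cap\SQ=\{\pm1\}$, hence $\eta\in\{\pm1,\pm\sqrt{-1}\}$. The proposal is correct and matches the paper's approach.
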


\begin{proof}
If $\rho_0\in K$ is such that $\rho_0^2\in \SQ$, then $\rho_0\in k_i=\SQ(\sqrt{d_i})$ for some $i$, and hence, either $\rho_0=q$ or $\rho_0=q\sqrt{d_i}$ for some $q\in\SQ$. Assume that $\rho_0\Oo_K=\rho_0'\Oo_K$. Then, for instance, $q\sqrt{d_i}=q'\eta$ for some $\eta\in\Oo_K^\times$.
	Then, $\eta^2\in\SQ\cap\Oo_K^\times=\{\pm 1\}$ and $\eta\in\{\pm 1,\pm i\}$. But $\eta\not=\pm 1$ since else $\sqrt{d_i}$ would be rational and $\eta\in\{\pm \sqrt{-1}\}$ implies $ \sqrt{-1}\in K$. The other cases lead to the same conclusion.
	
	 Finally, if $ \sqrt{-1}\notin K$, $H_1$ contains 4 classes mod $H_0$, those of $(1)$, $(\sqrt{d_1})$,  $(\sqrt{d_2})$, and $(\sqrt{d_3})$,  while, if $\sqrt{-1}\in K$, for instance $d_1=-1$ and the classes of (1) and $(\sqrt{-1})$ are equal as well as those of $(\sqrt{d_2})=(\sqrt{-d_2})$.
	\end{proof}

\begin{lemma}
$\left(\Oo_K^\times:(\Oo_K^\times)^{2}\right)=\left\{\begin{array}{l} 16 \textrm{ if } K\subset\SR \\ \;4 \textrm{ if } K\not\subset \SR\end{array}\right.$ and 
\be\label{eq:8j} \left(\Oo_K^\times:(\pm\Oo_K^\times)^{2}\right)=\left\{\begin{array}{l} 8 \textrm{ if } K\subset\SR \\ 4 \textrm{ if } \sqrt{-1}\in K \\ 2 \textrm{ else}.\end{array}\right.\ee
\end{lemma}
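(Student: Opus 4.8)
The plan is to read off the structure of $\Oo_K^\times$ from Dirichlet's unit theorem and then count squares coset by coset. Since $K$ is biquadratic bicyclic, $K/\SQ$ is Galois of degree $4$, so $K$ is either totally real --- the case $K\subset\SR$, with signature $(r_1,r_2)=(4,0)$ --- or totally imaginary --- the case $K\not\subset\SR$, with signature $(0,2)$. Writing $r=r_1+r_2-1$ for the unit rank, this gives $r=3$ in the first case and $r=1$ in the second. Let $\mu_K$ denote the group of roots of unity in $K$; it is finite, hence cyclic (being a finite subgroup of $K^\times$), and its order is even because $-1\in\mu_K$. Dirichlet's theorem then yields an isomorphism of abelian groups $\Oo_K^\times\simeq\mu_K\times\SZ^{\,r}$.

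For the first assertion I would simply square this decomposition: $(\Oo_K^\times)^2\simeq\mu_K^2\times(2\SZ)^{\,r}$, so that
$$\Oo_K^\times/(\Oo_K^\times)^2\;\simeq\;\mu_K/\mu_K^2\;\times\;(\SZ/2\SZ)^{\,r}.$$
Because $|\mu_K|$ is even, $\mu_K^2$ is precisely the index-$2$ subgroup of the cyclic group $\mu_K$, so this quotient has order $2\cdot 2^{r}=2^{r+1}$: that is $16$ if $K\subset\SR$ and $4$ if $K\not\subset\SR$.

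For the second assertion I would track the image of $-1$ in $\Oo_K^\times/(\Oo_K^\times)^2$. Since $\pm(\Oo_K^\times)^2=\langle-1\rangle\cdot(\Oo_K^\times)^2$, we have
$$\bigl(\Oo_K^\times:\pm(\Oo_K^\times)^2\bigr)=\frac{\bigl(\Oo_K^\times:(\Oo_K^\times)^2\bigr)}{\bigl(\pm(\Oo_K^\times)^2:(\Oo_K^\times)^2\bigr)},$$
and the denominator equals $1$ if $-1$ is a square in $\Oo_K^\times$ and $2$ otherwise. Now $-1=\eta^2$ with $\eta\in\Oo_K^\times$ is equivalent to $\sqrt{-1}\in K$ (the converse using that $\sqrt{-1}$, being a root of unity, is a unit). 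Hence the index equals $2^{r+1}$ when $\sqrt{-1}\in K$ --- which forces $K\not\subset\SR$ and $r=1$, giving $4$ --- and equals $2^{r}$ when $\sqrt{-1}\notin K$, namely $8$ if $K\subset\SR$ and $2$ otherwise. This is exactly (\ref{eq:8j}).

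The only step deserving some attention is the identification of $\mu_K$ in the first paragraph: one should note that $\mu_K$ may be strictly larger than $\{\pm1\}$ (for instance $\mu_K=\langle\zeta_8\rangle$ when $K=\SQ(\zeta_8)=\SQ(\sqrt{-1},\sqrt{2})$, or $\mu_K=\langle\zeta_{12}\rangle$ when $K=\SQ(\sqrt{-1},\sqrt{3})$). This causes no trouble, because the argument uses only that $\mu_K$ is cyclic of even order, so that squaring is an index-$2$ operation on it; everything else is bookkeeping with the Dirichlet decomposition.
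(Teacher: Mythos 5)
Your proposal is correct and follows essentially the same route as the paper: Dirichlet's unit theorem gives $\Oo_K^\times\simeq\mu_K\times\SZ^{r}$ with $r=3$ or $1$, squaring yields index $2^{r+1}$, and Formula~(\ref{eq:8j}) then follows from the observation that $\pm(\Oo_K^\times)^{2}=(\Oo_K^\times)^{2}$ exactly when $\sqrt{-1}\in K$. Your uniform treatment of $\mu_K$ as a cyclic group of even order (rather than the paper's separate real/imaginary cases) is a minor, harmless streamlining.
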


\begin{proof}
If $K\subset \SR$ then, by Dirichlet's unit theorem, $K$ has a fundamental system of 3 units $u_1,u_2,u_3$ such that any $\eta\in\Oo_K^\times$ may be uniquely written $\eta=\pm u_1^{m_1}u_2^{m_2}u_3^{m_3}$ where $(m_1,m_2,m_3)\in\SZ^3$, thus $\Oo_K^\times\simeq\SZ^3\times(\SZ/2\SZ)$. Any $\eta^2\in(\Oo_K^\times)^2$ may be uniquely written $\eta^2=u_1^{2m_1}u_2^{2m_2}u_3^{2m_3},$ thus $\vert\Oo_K^\times/(\Oo_K^\times)^{(2)}\vert=2^4$. 

If $K\not\subset\SR$, $K$ has a unique fundamental unit $u$ and $\Oo_K^\times\simeq \mu_K\times\SZ$ where $\mu_K$ denotes the group of roots of unity. As $\left(\mu_K:(\mu_K)^{(2)}\right)=2$,  $\vert\Oo_K^\times/(\Oo_K^\times)^{(2)}\vert=2^2$. 

Then, we obtain Formula~(\ref{eq:8j}) by noticing that $\pm(\Oo_K^\times)^{(2)}=(\Oo_K^\times)^{(2)}$ if and only if $\sqrt{-1}\in K$. 
\end{proof}

\begin{lemma}[Kubota's Hilfssatz 16]
\be\label{eq;9k} (H_3:H_2)=\left(\,\mathrm{Im}\,\psi_0\,:\,(\Pp_K\cap\mathrm{Im}\,\psi_0)\,\right)  
=\left\{\begin{array}{ll}2^{s_K-5}\,(\Oo_K^\times:\Oo_K^*) & \mathrm{if}\; K\subset \SR\\ 2^{s_K-3}\,(\Oo_K^\times:\Oo_K^*) & \mathrm{if}\;  K\not\subset \SR \end{array}\right.\ee 
	\end{lemma}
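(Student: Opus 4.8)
The strategy is pure bookkeeping: all the hard analytic and algebraic work has already been packaged into the indices $(H_3:H_0)$, $(H_2:H_1)$, $(H_1:H_0)$ and into the structure of $\Oo_K^\times$. Since each of the groups $H_0\subseteq H_1\subseteq H_2\subseteq H_3$ has finite index in the next (this follows from Lemmas~\ref{lem.321} and the formulas (\ref{eq:6t}), (\ref{eq:7v}), once one knows $\Oo_K^*$ has finite index in $\Oo_K^\times$, which it does as it contains $\pm(\Oo_K^\times)^{(2)}$ by Lemma~\ref{lem:7}), the multiplicativity of the index in a tower gives
\[
(H_3:H_2)=\frac{(H_3:H_0)}{(H_2:H_0)}=\frac{(H_3:H_0)}{(H_2:H_1)\,(H_1:H_0)}.
\]
So the first step is to write this identity and substitute: $(H_3:H_0)=2^{s_K}$ from (\ref{eq:5ter}), $(H_2:H_1)=\left(\Oo_K^*:\pm(\Oo_K^\times)^{(2)}\right)$ from (\ref{eq:6t}), and $(H_1:H_0)$ from (\ref{eq:7v}).

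The second step is to rewrite the middle factor in terms of data we have a formula for. From the chain of subgroups $\pm(\Oo_K^\times)^{(2)}\subseteq\Oo_K^*\subseteq\Oo_K^\times$ (the left inclusion being exactly the last assertion of Lemma~\ref{lem:7}), multiplicativity gives
\[
\left(\Oo_K^*:\pm(\Oo_K^\times)^{(2)}\right)=\frac{\left(\Oo_K^\times:\pm(\Oo_K^\times)^{(2)}\right)}{\left(\Oo_K^\times:\Oo_K^*\right)},
\]
and the numerator is supplied by (\ref{eq:8j}). Plugging everything in,
\[
(H_3:H_2)=\frac{2^{s_K}\,\left(\Oo_K^\times:\Oo_K^*\right)}{\left(\Oo_K^\times:\pm(\Oo_K^\times)^{(2)}\right)\,(H_1:H_0)}.
\]

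The third step is to evaluate the denominator $D:=\left(\Oo_K^\times:\pm(\Oo_K^\times)^{(2)}\right)\cdot(H_1:H_0)$ in each of the three cases of (\ref{eq:8j}), remembering that $\sqrt{-1}\notin K$ whenever $K\subset\SR$. If $K\subset\SR$ then $D=8\cdot 4=2^5$, giving $(H_3:H_2)=2^{s_K-5}(\Oo_K^\times:\Oo_K^*)$. If $K\not\subset\SR$ and $\sqrt{-1}\in K$ then $D=4\cdot 2=2^3$; if $K\not\subset\SR$ and $\sqrt{-1}\notin K$ then $D=2\cdot 4=2^3$; in both imaginary cases $(H_3:H_2)=2^{s_K-3}(\Oo_K^\times:\Oo_K^*)$. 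This is exactly (\ref{eq;9k}).

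There is essentially no obstacle here beyond making sure the cases of (\ref{eq:7v}) and (\ref{eq:8j}) are lined up correctly — the only point requiring a moment's care is that the two imaginary sub-cases, which differ in each of the two factors of $D$, nevertheless yield the same product $2^3$, so the final formula does not need to distinguish $\sqrt{-1}\in K$ from $\sqrt{-1}\notin K$ in the imaginary case. Everything else is substitution into the tower identity.
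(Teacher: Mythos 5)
Your proposal is correct and follows exactly the same route as the paper: the tower identity $(H_3:H_2)=(H_3:H_0)/\bigl((H_2:H_1)(H_1:H_0)\bigr)$, the rewriting of $\left(\Oo_K^*:\pm(\Oo_K^\times)^{(2)}\right)$ as $\left(\Oo_K^\times:\pm(\Oo_K^\times)^{(2)}\right)/\left(\Oo_K^\times:\Oo_K^*\right)$, and the case-by-case evaluation $8\times 4$, $4\times 2$, $2\times 4$. Your explicit remark that the two imaginary sub-cases both give $2^3$ is a point the paper leaves implicit, but the argument is the same.
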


\begin{proof}
Thanks to Formulas~(\ref{eq:5ter}), (\ref{eq:6t}), (\ref{eq:7v}), and (\ref{eq:8j}), we may write:
$$(H_3:H_2)=\frac{(H_3:H_0)}{(H_2:H_1)(H_1:H_0)}= =\frac{2^{s_K}}{\left(\,\Oo_K^*\,:\, \pm(\Oo_K^\times)^{(2)}\right)(H_1:H_0)} $$
$$=\frac{2^{s_K}(\Oo_K^\times:\Oo_K^*)}{\left(\,\Oo_K^\times\,:\, \pm(\Oo_K^\times)^{(2)}\right)(H_1:H_0)} =2^{s_K}(\Oo_K^\times:\Oo_K^*)\times \left\{\begin{array}{ll}\frac{1}{8\times 4} & if K\subset \SR\\ \frac{1}{4\times 2} & if \sqrt{-1}\in K \\ \frac{1}{2\times 4} & else\end{array}\right.$$
\end{proof}

\begin{proof}[{\bf Proof of Kubota's Satz 4}]
By Formulas~(\ref{eq:I}), (\ref{eq:A}), (\ref{eq:5bis}), and (\ref{eq;9k}), we have:
$$\vert\Ker\,\psi\vert =\frac{\prod_{i=1}^3\vert\Pp o(k_i)\vert}{(\Ima\,\psi_0:\Pp_K\cap\Ima\,\psi_0)}= \frac{\sum_{i=1}^3 2^{s_i-1-\nu_i}}{2^{s_K-(5\,\textrm{or}\,3)}(\Oo_K^\times:\Oo_K^*)}$$
$$ = \frac{2^{2s_K+i_2-3-\nu_K}}{2^{s_K-(5\,\textrm{or}\,3)}q_K(\Oo_{k_1}^\times\Oo_{k_2}^\times\Oo_{k_3}^\times:\Oo_K^*)} = \frac{1}{q_K}\frac{2^{s_K+i_2-\nu_K+(2\,\textrm{or}\,0)}}{\prod_{i=1}^3 (\Oo_{k_i}^\times:\Oo_{k_i}^*)} \left(\times 2 \begin{array}{l}\mathrm{if}\; K\subset\SR \\ \mathrm{and}\; \nu_K=0\end{array}\right)$$
For $K\subset\SR$, $(\Oo_{k_i}^\times:\Oo_{k_i}^*)=2$ if $\nu_i=0$ and 1 else, thus 
$\prod_{i=1}^3\vert (\Oo_{k_i}^\times:\Oo_{k_i}^*)\vert=2^{3-\nu_K}.$
Consequently, $\vert \Ker\;\psi \vert = \frac{1}{2q_K}{2^{s_K+i_2}} (\times 2 \;if\;\nu_K=0)$.

\noindent For $K\not\subset\SR$, $\prod_{i=1}^3\vert (\Oo_{k_i}^\times:\Oo_{k_i}^*)\vert=2^{1-\nu_K}.$ Consequently, $\vert \Ker\;\psi \vert = \frac{1}{2q_K}{2^{s_K+i_2}}$
\end{proof}



\end{document}